\documentclass{article}
\usepackage{fullpage}
\usepackage{amssymb,amsmath,amsfonts,amsthm,mathrsfs}
\usepackage[all]{xy} \SelectTips{cm}{10}
\usepackage{color}

\numberwithin{equation}{section}

\newcommand{\FF}{\mathbb F}

\newcommand{\QQ}{\mathbb Q}
\newcommand{\RR}{\mathbb R}

\newcommand{\ZZ}{\mathbb Z} 
\newcommand{\Zhat}{\widehat\ZZ}

\newcommand{\OK}{\mathcal O_{K}}

\newcommand{\norm}[1]{ \left|\!\left| #1 \right|\!\right|  }

\def\cyc{{\operatorname{cyc}}}
\def\ab{{\operatorname{ab}}}
 
\def\Gal{\operatorname{Gal}}
 
\def \ML {\operatorname{M}}  
\def \GL {\operatorname{GL}}  

\def \SL {\operatorname{SL}}
\def \PSL {\operatorname{PSL}}
\def\Aut{\operatorname{Aut}} 
\def\End{\operatorname{End}}
\def\Frob{\operatorname{Frob}}
\newcommand{\tor}{{\operatorname{tor}}}

\def\id{\operatorname{id}}

\def\tr{\operatorname{tr}}

%Color comments

%Renamings
\newcommand{\surjects}{\twoheadrightarrow}
\newcommand{\injects}{\hookrightarrow}
\newcommand{\isom}{\simeq}

 % intersection of a collection
\newcommand{\intersect}{\cap} % binary intersection
\newcommand{\Union}{\bigcup} % union of a collection
\newcommand{\union}{\cup} % binary union

 % binary direct sum
 % direct sum of a collection
%Fields

\newcommand{\Kbar}{\overline{K}}

\newcommand{\kvbar}{\overline{k_v}}

\newcommand{\Kcyc}{K^\cyc}

\newcommand{\GalK}{{\Gal}(\Kbar/K)}

\newcommand{\Galkv}{{\Gal}(\kvbar/k_v)}
\newcommand{\linfty}{\ell^{\infty}}

\newcommand{\GalKcyc}{{\Gal}(K^{\cyc}/K)}

%Elliptic Curves
\def\Etor{E(\Kbar)_{\tor}}
\def\El{E[\ell]}

%Extras
\def\Gab{\{\pm 1\}\times\hat{\ZZ}^{*}}
\def\Gl{\GL_2(\ZZ_{\ell})}
\def\Gln{\GL_2(\ZZ/\ell^n\ZZ)}
\def\Gfl{\GL_2(\FF_{\ell})}
\def\GZ{\GL_2(\hat{\ZZ})}

\def\Gm{\GL_2(\ZZ/m\ZZ)}

\def\Gprod{\prod_{\ell \ \text{prime}}\Gl}
\def\Sl{\SL_2(\ZZ_{\ell})}

\def\Zl{\ZZ_{\ell}}
\def\Fl{\FF_{\ell}}

\def\Zhat{\hat{\ZZ}}

\def\Zm{\ZZ/m\ZZ}

\def\sgn{\mathop{\rm sgn}\nolimits}

\def\Qalpha{\QQ(\alpha)}
\def\OK{\mathcal{O}_K}

\newtheorem{theorem}{Theorem}[section]
\newtheorem{lemma}[theorem]{Lemma}
\newtheorem{corollary}[theorem]{Corollary}
\newtheorem{proposition}[theorem]{Proposition}

\theoremstyle{definition}
\newtheorem{definition}[theorem]{Definition}

\theoremstyle{remark}
\newtheorem{remark}[theorem]{Remark}

\definecolor{webcolor}{rgb}{0.8,0,0.2}
\definecolor{webbrown}{rgb}{.6,0,0}
\bibliographystyle{akpbib}

\usepackage[
        colorlinks,
        linkcolor=webbrown,  filecolor=webcolor,  citecolor=webbrown, 
        backref,
        pdfauthor={Aaron Greicius}, 
       pdftitle={Elliptic curves with surjective adelic Galois representations},
]{hyperref}
%\usepackage[author-year,backrefs,lite]{amsrefs} % for bibliography

%*********************************
\begin{document}
\title{Elliptic curves with surjective adelic Galois representations}
%\subjclass[2000]{Primary 11G05; Secondary 11F80, 11N36}

%\keywords{elliptic curves, Galois representations, $\ell$-adic, adelic, torsion, maximal subgroups, profinite}
%\tableofcontents 
\author{Aaron Greicius\\ Institut f\"ur Mathematic\\ Humboldt-Universit\"at zu Berlin\\ 12489 Berlin, Germany\\ greicius@math.hu-berlin.de\\ http://www.mathematik.hu-berlin.de/\~{}greicius}
\date{}
\maketitle

\begin{abstract}Let $K$ be a number field. The $\GalK$-action on the the torsion of an elliptic curve $E/K$ gives rise to an adelic representation $\rho_E\colon\GalK\rightarrow\GZ$. From an analysis of maximal closed subgroups of $\GZ$ we derive useful necessary and sufficient conditions for $\rho_E$ to be surjective. Using these conditions, we compute an example of a number field $K$ and an elliptic curve $E/K$ that admits a surjective adelic Galois representation.  
\end{abstract}

%%****************************************************************
\section{Introduction} \label{S:intro}
Let $E/K$ be an elliptic curve, with $K$ a number field. Fix an algebraic closure $\Kbar$ of $K$ and define $G_{K}:=\GalK$. For each positive integer $m\geq 1$ and each prime number $\ell\geq 1$, the action of $G_{K}$ on the various torsion subgroups of $E(\Kbar)$ gives rise to continuous representations
\[ \rho_{E,m}\colon G_K\rightarrow\Aut(E(\Kbar)[m])\simeq\Gm\]
\[\rho_{E,\linfty}\colon G_K\rightarrow\Aut(E(\Kbar)[\linfty])\isom\Gl.\]
These representations are neatly packaged into the single representation  
\[\rho_{E}\colon G_K\rightarrow\Aut(\Etor)\isom\GZ\]
describing the action of $G_K$ on the full torsion subgroup of $E(\Kbar)$. Here $\Zhat:=\varprojlim\Zm\isom\prod_{\ell \ \text{prime}}\Zl$ is the profinite completion of $\ZZ$. We refer to $\rho_{E,\linfty}$ and $\rho_{E}$ respectively as the \emph{$\ell$-adic} and \emph{adelic} representations associated to $E/K$. Serre proves in \cite{Se72} that if $E$ does not have complex multiplication (non-CM), then the adelic image of Galois, $\rho_{E}(G_{K})$, is open in $\GZ$. Equivalently, since the adelic image is always a closed subgroup, Serre's result asserts that $\rho_{E}(G_{K})$ is of finite index in $\GZ$ when $E/K$ is non-CM. The question naturally arises then, whether this index is ever 1. In other words, are there elliptic curves $E/K$ for which $\rho_{E}$ is surjective? 

When $K=\QQ$ the answer is `no', as Serre himself proves in the same paper (\cite[\S 4.4]{Se72}). As we show below, the obstacle in this situation is essentially the fact that $\QQ^\cyc=\QQ^\ab$, leaving open the possibility of $\rho_{E}$ being surjective for other number fields $K$. Indeed, we provide simple necessary and sufficient conditions for the adelic representation to be surjective and give an example of a (non-Galois) cubic extension $K/\QQ$ and an elliptic curve $E/K$ for which $\rho_{E}$ is surjective.
%**********************************************************
\subsection{Statement of results}\label{SS:results} 
When is $\rho_{E}$ surjective; that is, when do we have $\rho_{E}(G_K)=\GZ$? We may put aside the arithmo-geometric component of this question for the time being and ask more generally: When is a closed subgroup $H\subseteq\GZ$ in fact all of $\GZ$? 

The group $\GZ$ is both a profinite and a product group, as articulated by the two isomorphisms
\begin{equation}\label{E:1}
\varprojlim \Gm \simeq\GZ\simeq\prod_{\ell \ \text{prime}}\Gl.\end{equation}
Consider the projection maps $\pi_{\ell}\colon\GZ\rightarrow\Gl$ that arise from the product group description of $\GZ$. An obvious necessary condition for a closed subgroup $H$ to be all of $\GZ$ is that the restrictions $\pi_\ell\colon H\rightarrow\Gl$ must all be surjective. It turns out that this condition is not so far from being sufficient; one need only further stipulate that the restriction of the abelianization map to $H$ be surjective. As we will show, the abelianization of $\GZ$ is isomorphic to $\Gab$, and we may describe the abelianization map as $(\sgn,\det)\colon\GZ\rightarrow\Gab$, where $\det$ is the determinant map, and $\sgn\colon\GZ\rightarrow\{\pm 1\}$ is a certain `sign' map on $\GZ$. Taken together this yields the following theorem.
\begin{theorem}
\label{T:H=G}
Let $H\subseteq \GZ$ be a closed subgroup. Then $H=\GZ$ if and only if
\begin{itemize}
\item[(i)]$\pi_\ell: H\rightarrow \Gl$ is surjective for all 
primes $\ell$ and
\item[(ii)]$(\sgn,\det):H\rightarrow \Gab$ is 
surjective.
\end{itemize}
\end{theorem}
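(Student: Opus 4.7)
The forward direction is immediate. For the converse, my strategy is to reduce to a finite-level statement about the images of $H$ in $\GN$ and then apply a Goursat-style lemma using the structure of simple quotients of $\Gln$.

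Since $\GZ$ is profinite and $H$ is closed, $H = \GZ$ if and only if the image $\overline{H}_N$ of $H$ in $\GN$ equals $\GN$ for every $N \geq 1$. Using the Chinese Remainder decomposition $\GN \cong \prod_{\ell \mid N} \Gln$ (with $n = v_\ell(N)$), hypothesis (i) implies $\overline{H}_N$ surjects onto each factor $\Gln$, and since $\GN^{\ab}$ is a quotient of $\Gab$ via the natural map $\GZ \surjects \GN$, hypothesis (ii) implies $\overline{H}_N$ surjects onto $\GN^{\ab}$.

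The core step is the following finite-group lemma: if $G_1, \ldots, G_k$ are finite groups no two of which share a nonabelian simple quotient, and $\overline{H} \subseteq \prod_i G_i$ is a subgroup projecting onto each $G_i$ and onto $(\prod_i G_i)^{\ab}$, then $\overline{H} = \prod_i G_i$. I would prove this by induction on $k$. The heart is the case $k = 2$: by Goursat's lemma, $\overline{H}$ is the fibre product of $G_1, G_2$ over a common quotient $Q$; a nontrivial $Q$ has a simple quotient $S$ which, by the no-common-nonabelian-quotient hypothesis, must be abelian, hence $S = \ZZ/p\ZZ$. The resulting map $G_1 \times G_2 \to S \times S$ factors through $(G_1 \times G_2)^{\ab}$, so $\overline{H}$ must surject onto $S \times S$ by the abelianization hypothesis --- yet the fibre product structure forces the image of $\overline{H}$ in $S \times S$ to be only the graph of an isomorphism $S \to S$, a proper subgroup, contradiction. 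For the inductive step I would group $G_1 \times (G_2 \times \cdots \times G_k)$ and invoke the $k = 2$ case, using the fact that a nonabelian simple quotient of a direct product always factors through a single factor (a centreless simple group rules out nontrivial fibre-product composites).

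Finally, I would verify the no-common-nonabelian-simple-quotient hypothesis for the $\Gln$: each such group is an extension of $\GL_2(\Fl)$ by a pro-$\ell$ group, so its nonabelian simple quotients are among those of $\GL_2(\Fl)$. For $\ell \in \{2, 3\}$, $\PSL_2(\FF_2) = S_3$ and $\PSL_2(\FF_3) = A_4$ are not simple, so $\GL_2(\Fl)$ has no nonabelian simple quotient at all. For $\ell \geq 5$, the unique nonabelian simple quotient of $\GL_2(\Fl)$ is $\PSL_2(\Fl)$, of order $\ell(\ell^2 - 1)/2$, and these orders are pairwise distinct in $\ell$. Applying the finite-group lemma to $\GN \cong \prod_{\ell \mid N} \Gln$ yields $\overline{H}_N = \GN$ for all $N$, and hence $H = \GZ$. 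The main obstacle is packaging the inductive Goursat argument together with the classification of simple quotients at the small primes: it is precisely the extra abelian quotient of $\GL_2(\FF_2) = S_3$ (beyond the determinant) that forces the sign character $\sgn$ to appear alongside $\det$ in condition (ii).
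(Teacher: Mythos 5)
Your proof is correct, and it takes a recognizably different route from the paper even though it uses the same essential ingredients. The paper never reduces to finite levels: it proves that every proper closed subgroup of a profinite group lies in a maximal closed subgroup (which is automatically open), then classifies the maximal closed subgroups of a product $\prod_\alpha G_\alpha$ of profinite groups with pairwise disjoint $\text{Quo}$-sets (Proposition~\ref{P:Max}), using a topological Goursat's Lemma adapted to the \emph{maximal} case (where the common quotient $G_1/N_1 \cong G_2/N_2$ is automatically simple). You instead reduce to the finite quotients $\GZ \surjects \GN$, decompose by CRT, and run an ordinary Goursat induction on the number of factors, using the abelianization hypothesis to rule out a nontrivial common quotient rather than maximality. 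Your version trades the overhead of the maximal-subgroup machinery for a finite-level reduction plus an induction; the paper's version yields as a byproduct a structural classification of \emph{all} maximal closed subgroups of $\GZ$ (used later in \S 2.3 to describe the exceptional-image possibilities), which your approach does not directly give. Both are clean and both are right.

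One small inaccuracy worth flagging: you assert that ``the unique nonabelian simple quotient of $\GL_2(\Fl)$ is $\PSL_2(\Fl)$'' for $\ell \geq 5$. In fact $\PSL_2(\Fl)$ is \emph{not} a quotient of $\GL_2(\Fl)$ for $\ell \geq 3$: any surjection onto a centreless simple group kills the scalars, hence factors through $\PGL_2(\Fl)$, and for $\ell \geq 3$ the proper nontrivial quotients of $\PGL_2(\Fl)$ are only $\ZZ/2\ZZ$. So $\GL_2(\Fl)$, and hence each $\Gln$ and $\Gl$, has \emph{no} nonabelian simple quotient at all (the paper is careful to write $\text{Quo}(\Gln) \subseteq \{[\PSL_2(\Fl)]\}$, not equality). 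This only strengthens your disjointness hypothesis (trivially: empty sets are disjoint), so the argument still closes, but the precise statement you need is merely that any nonabelian simple quotient of $\Gln$, should one exist, must be isomorphic to $\PSL_2(\Fl)$.
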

Returning to our representation $\rho_{E}$, we can easily rephrase Theorem~\ref{T:H=G} to derive simple necessary and sufficient conditions for surjectivity. 
\begin{theorem}\label{T:rhoSurj}
Let $E/K$ be an elliptic curve defined over a 
number 
field 
$K$. Let $\Delta\in K^\times$ be the discriminant of any Weierstrass model of $E/K$. 
Then $\rho_{E}$ is surjective if and only if
\begin{itemize}
\item[(i)] the $\ell$-adic representation $\rho_{\linfty}\colon G_K\rightarrow\Gl$ is surjective for all $\ell$,
\item[(ii)]$K\cap\QQ^{\cyc}=\QQ$ and
\item[(iii)] $\sqrt{\Delta}\notin K^{\cyc}$.
\end{itemize}
\end{theorem}
\begin{remark}Suppose $\Delta$ and $\Delta'$ are the discriminants of two Weierstrass models of $E/K$. Then $\Delta'=u^{12}\Delta$ for some $u\in K$. Thus $\Delta\notin\Kcyc$ if and only if $\Delta'\notin\Kcyc$. In other words, condition (iii) is well-defined.
\end{remark}
\begin{remark}
Condition (i) is clearly equivalent to the surjectivity of the restrictions of the projection maps $\pi_\ell$ to $\rho_{E}(G_K)$. As will be explained below, conditions (ii) and (iii) are equivalent to the surjectivity of the restriction of the abelianization map to $\rho_{E}(G_K)$.
\end{remark}

The theorem suggests that when on the hunt for an elliptic curve with surjective adelic Galois representation, we should first find a ``suitable'' extension $K/\QQ$ which satisfies condition (ii) and which could possibly satisfy condition (iii) for some $E/K$. Note first that for $K=\QQ$, condition (iii) will never be satisfied, as  $\sqrt{\Delta}\in\QQ^\ab=\QQ^\cyc$. Thus there are no elliptic curves $E/\QQ$ with surjective $\rho_{E}$. Likewise, condition (ii) will not be satisfied by any quadratic extension of $\QQ$. With an eye toward finding a candidate number field of minimal degree, we should then cast our net among the non-Galois cubic extensions of $\QQ$. Having fixed a candidate number field $K$, the more difficult task is finding an elliptic curve $E/K$ satisfying condition (i). In our example we work over the field $\QQ(\alpha)$, where $\alpha$ is the real root of $f(x)=x^3+x+1$. Thanks to similarities between the field $\Qalpha$ and $\QQ$, we are able to extend to elliptic curves $E/\Qalpha$ the techniques Serre uses in \cite{Se72} to compute the $\ell$-adic images of elliptic curves $E/\QQ$. This allows us to easily find examples of elliptic curves over $\Qalpha$ with surjective adelic Galois representations. We record one example here as a theorem.
\begin{theorem}\label{T:example}
Let $K=\QQ(\alpha)$, where $\alpha$ is the real root of $f(x)=x^3+x+1$. Let $E/K$ be the elliptic curve defined by the Weierstrass equation $y^2+2xy+\alpha y=x^3-x^2$. The associated adelic representation $\rho_{E}\colon G_K\rightarrow\GZ$ is surjective.
\end{theorem}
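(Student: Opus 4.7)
The plan is to verify the three conditions of Theorem \ref{T:rhoSurj} in order of increasing difficulty. Condition (ii) is essentially free: the polynomial $f(x)=x^3+x+1$ has discriminant $-4-27=-31$, which is not a square in $\QQ$, so the Galois closure of $K/\QQ$ has group $S_3$ and $K$ itself is non-Galois. Hence $K$ contains no nontrivial subfield that is Galois over $\QQ$, and in particular $K\cap\QQ^{\cyc}$, being Galois over $\QQ$ and contained in $K$, must equal $\QQ$.

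For condition (iii), I would first compute the discriminant $\Delta$ of the given Weierstrass model using the standard formulas with $a_1=2$, $a_2=-1$, $a_3=\alpha$, $a_4=a_6=0$ and the relation $\alpha^3=-\alpha-1$, obtaining an explicit element of $\OK$. Since $K^{\cyc}=K\cdot\QQ^{\cyc}$, $K\cap\QQ^{\cyc}=\QQ$, and every quadratic extension of $\QQ$ lies in $\QQ^{\cyc}$ by Kronecker--Weber, the quadratic subextensions of $K^{\cyc}/K$ are exactly the fields $K(\sqrt{d})$ with $d\in\QQ^\times$. Thus $\sqrt{\Delta}\in K^{\cyc}$ if and only if $\Delta\in d\cdot(K^\times)^2$ for some $d\in\QQ^\times$. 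I would rule this out by a local valuation argument: factor $(\Delta)$ in $\OK$, locate a prime $\Pp$ of $\OK$ at which $\ord_\Pp(\Delta)$ is odd, and show that the splitting behavior in $K$ of the rational prime $p$ below $\Pp$ is incompatible with any global scaling by a rational $d$.

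The real work, and the main obstacle, is condition (i): surjectivity of $\rho_{E,\linfty}$ for every prime $\ell$. I would follow the Serre-style three-stage template the author alludes to. First, for each prime $\ell\geq 5$ I would show that the mod-$\ell$ image in $\Gfl$ is full by computing Frobenius traces $a_\Pp=\Norm(\Pp)+1-|E(\OK/\Pp)|$ at a handful of primes $\Pp$ of good reduction and checking, via Dickson's classification of maximal subgroups of $\Gfl$, that the trace/determinant data ruling out Borel, normalizer of (split or non-split) Cartan, and exceptional $A_4,S_4,A_5$ subgroups all hold simultaneously. Second, for such $\ell\geq 5$, I would invoke the standard lifting lemma (surjectivity of $\rho_{E,\ell}$ implies surjectivity of $\rho_{E,\linfty}$ onto $\Gl$, since $\Sl$ has no proper closed subgroup surjecting onto $\SL_2(\Fl)$). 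Third, the small primes $\ell=2,3$ must be handled by hand: verify surjectivity of $\rho_{E,8}$ and $\rho_{E,9}$ by explicit Frobenius computations (the $2$-adic case being the most delicate, since the $2$-torsion field and the field generated by $\sqrt{\Delta}$ are directly linked to condition (iii)), and then lift by the corresponding $2$- and $3$-adic versions of Serre's lemma. The reason the author selects the particular field $\QQ(\alpha)$ is that the arithmetic of $\OK$ (class number, units, small split primes) is close enough to that of $\ZZ$ to make these computations tractable by essentially transporting Serre's $\QQ$-arguments from \cite{Se72}.
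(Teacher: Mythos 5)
Your treatment of conditions (ii) and (iii) is sound and essentially matches the paper's: $K\cap\QQ^\cyc=\QQ$ follows from $K$ being non-Galois cubic (disc $f=-31$), and for (iii) the paper indeed argues from the factorization $(\Delta_E)=P_{131}Q_{2207}$, using the fact that $131$ is totally split while $\ord_{Q_{131}}\Delta_E=\ord_{R_{131}}\Delta_E=0\ne \ord_{P_{131}}\Delta_E$, so $\Delta_E$ cannot equal a rational times a square in $K^\times$. Your rephrasing via $K^\cyc=K\cdot\QQ^\cyc$ and Kronecker--Weber is correct.

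The real gap is in your plan for condition (i). You propose, ``for each prime $\ell\geq 5$ I would show that the mod-$\ell$ image in $\Gfl$ is full by computing Frobenius traces at a handful of primes $\Pp$ \dots and checking, via Dickson's classification, that the data rule out Borel, Cartan normalizers, and exceptional subgroups.'' This cannot work as stated, because there are infinitely many primes $\ell$ and no finite collection of Frobenius elements can rule out the Borel case for all of them at once: for any single good place $v$, the discriminant $t_v^2-4N_v$ is a nonzero integer and hence is a square modulo infinitely many primes $\ell$, so the Borel obstruction is never eliminated uniformly by Frobenius traces alone. Some \emph{a priori} finiteness mechanism is required before the per-$\ell$ Dickson checks can begin. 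The paper supplies exactly this via the semistable machinery of Section 3.2: it first verifies that $E$ is semistable with conductor $N=P_{131}Q_{2207}$ and $v(j_E)=-1$ at all bad places, then establishes that $K=\QQ(\alpha)$ has a real embedding, trivial narrow class group, and $\alpha\in(U_K^+-1)\cap U_K$, and invokes Proposition~\ref{semistableborel} and Corollaries~\ref{C1:semistable}--\ref{C2:semistable} to conclude that any exceptional $\ell\neq 31$ must divide $\#\tilde E_v(k_v)$ for \emph{every} good $v$. Computing $\#\tilde E_{Q_{11}}(k_v)=16$ and $\#\tilde E_{Q_{23}}(k_v)=15$ then shows at a stroke that the only possible exceptional primes are $\ell\in\{2,3,31\}$ (the ramified prime $31$ is carved out separately and handled by the Frobenius criterion of Proposition~\ref{frobdisc}, much as you suggest). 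Your proposal mentions none of semistability, the narrow class group, or the unit condition, and so has no replacement for this reduction step. Once that is repaired, your plan for $\ell\geq 5$ and your outline for $\ell=2,3$ (lifting from mod $4$ and mod $9$, using the $\GL_2(\FF_\ell)$-module structure on $V_{\ell}/V_{\ell^2}$, and for $\ell=2$ exploiting the already-established surjectivity of $(\sgn,\det)$ via Corollary~\ref{C:G8}) would align with the paper's argument.
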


%----------------------------------
\subsection{Related results}The results of this paper first appeared in my doctoral thesis (\cite{GThesis}), wherein I also asked, in the spirit of Duke's \cite{Duke} and N.~Jones' \cite{Jones}, whether in fact for any suitable $K$ ``most'' elliptic curves have surjective adelic Galois representations. David Zywina has since answered this question in the affirmative. 

In more detail, given a number field $K$ with ring of integers $\mathcal{O}_K$, fix a norm $\norm{\cdot}$ on $\RR\otimes_\ZZ\mathcal{O}_K^2\isom\RR^{2[K:\QQ]}$. Given $x>0$, define $B_K(x)$ to be the set of pairs $(a,b)\in\mathcal{O}_K^2$ having norm no greater than $x$ for which the associated curve $E(a,b)$ given by $y^2=x^3+ax+b$ is an elliptic curve. Now define $S_K(x)$ to be the subset of $B_K(x)$ consisting of pairs $(a,b)$ whose associated elliptic curves have surjective adelic Galois representations. In \cite{Zywina} Zywina proves the following theorem using sieve methods.
\begin{theorem}[Zywina] Suppose $K\ne\QQ$ satisfies $K\cap\QQ^\cyc=\QQ$. Then 
\[
\lim_{x\to \infty} \frac{ |S_K(x)| }{ |B_K(x)| } = 1.
\]
In other words, most elliptic curves over $K$ have surjective adelic Galois representation.
\end{theorem}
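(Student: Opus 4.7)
The plan is to reduce to Theorem~\ref{T:rhoSurj} and show that the two unverified conditions there, namely surjectivity of every $\ell$-adic representation and $\sqrt{\Delta(a,b)}\notin K^\cyc$, hold for a density-one subset of $B_K(x)$ as $x\to\infty$; condition (ii) of that theorem is precisely the standing hypothesis on $K$. Writing $\Delta(a,b) = -16(4a^3+27b^2)$, standard lattice-point counts in $\OK^2$ give $|B_K(x)| \asymp x^{2[K:\QQ]}$, which is the denominator to beat.

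For the discriminant condition, $\sqrt{\Delta}\in K^\cyc$ forces $K(\sqrt{\Delta})/K$ to be a quadratic extension contained in a cyclotomic tower over $K$, which severely restricts the class of $\Delta$ in $K^\times/(K^\times)^2$ to a countable set (each such class lies in $K^\times$ modulo squares coming from a bounded-ramification abelian subextension of $K^\cyc/K$). Each admissible square class cuts out a Zariski-closed subvariety of $\AA^2_{\OK}$ defined by an equation of the form $-16(4a^3+27b^2) = c\cdot u^2$, of positive codimension. Summing the lattice-point contributions of these subvarieties within $B_K(x)$ yields $o(x^{2[K:\QQ]})$, hence density zero.

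For the $\ell$-adic condition, I would apply a large sieve in the style of Duke and Jones, adapted to $K$. For each prime $\ell\geq 5$, non-surjectivity of $\rho_{E,\ell^\infty}$ forces the mod-$\ell$ image to lie in a proper maximal subgroup of $\Gfl$, and Frobenius traces and determinants at primes $\mathfrak p$ of $\OK$ of residue characteristic different from $\ell$ translate this containment into congruence conditions on $(a,b)$ modulo $\mathfrak p$. Chebotarev equidistribution gives density $O(1/\ell)$ per maximal-subgroup class at each $\ell$, and a Montgomery-style large-sieve inequality over $\OK$ bounds the number of $(a,b)\in B_K(x)$ whose mod-$\ell$ image is non-surjective for some $\ell\leq L$ by $O(|B_K(x)|/\log L)$ for $L$ a small power of $x$. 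The hard part is uniformity: showing the tail $\ell > L(x)$ also contributes $o(|B_K(x)|)$ demands effective Chebotarev over $K$ together with a uniform-in-$\ell$ treatment of the maximal subgroups of $\Gfl$ (Borel, normalizer of split or non-split Cartan, and exceptional), with Mazur--Merel-type input ruling out exceptional images for all sufficiently large $\ell$ and a separate density-zero bound discarding the CM locus. This is the technical heart of Zywina's sieve argument; once assembled, it combines with Theorem~\ref{T:rhoSurj} and the discriminant count above to give $|S_K(x)|/|B_K(x)|\to 1$.
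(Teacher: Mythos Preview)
The paper does not prove this theorem at all: it is stated as a result of Zywina, attributed to \cite{Zywina}, and appears in the ``Related results'' subsection purely as context. There is therefore no proof in the paper to compare your proposal against.

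Your sketch is a plausible high-level outline of how such a density-one statement is typically attacked (reduce via Theorem~\ref{T:rhoSurj}, handle the discriminant condition by a square-class/thin-set argument, and control the $\ell$-adic images with a large sieve over $\mathcal{O}_K$), and indeed Zywina's paper does proceed by sieve methods. But as written it is an outline, not a proof: the ``hard part'' you flag --- uniform control of the tail $\ell > L(x)$ and the treatment of each maximal-subgroup type --- is exactly where all the work lies, and you have not carried any of it out. If the intent was to supply a proof for this paper, the honest move is simply to cite \cite{Zywina}, as the author does.
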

\begin{remark}In fact Zywina considers more generally the situation where $K\cap\QQ^\cyc$ is not required to be $\QQ$. As we recall below, in terms of arithmetic this means simply that the inclusion $\det(\rho_E(G_K))\subseteq \Zhat^*$ is not necessarily an equality. Zywina proves (\cite[Th. 1.3]{Zywina}) the expected generalization to this setting; namely, if $K\ne\QQ$, then for ``most'' elliptic curves $E/K$ we have $\rho_E(G_K)=\{A\in\GZ\colon \det A\in\det(\rho_E(G_K))\}$. \end{remark}
%**************************************************************
\subsection{Notation and conventions}\label{SS:Notations}
Let $G$ be a topological group, and let $H\subseteq G$ be a
closed subgroup.
The {\it commutator of $H$}, denoted $H'$, is the closure of the usual commutator
subgroup $[H,H]$. By a quotient of $G$ we shall always mean a continuous quotient. The \emph{abelianization} of $G$ is the quotient $G^\ab:=G/G'$. 

The two isomorphisms of Equation~\ref{E:1} give rise to reduction maps $r_m:\GZ\rightarrow \Gm$ and projection maps $\pi_\ell: \GZ\rightarrow
\Gl$, respectively. Following \cite{LT}, we associate with these
maps the following notation:
\begin{itemize}
\item[(i)] Let $P\subset\ZZ$ be the set of prime numbers. Given any $S\subseteq P$ let $\pi_S$ be the projection $\pi_S:\GZ\rightarrow\prod_{\ell\in S}\Gl$. Furthermore,  for any $X\subseteq \GZ$ we define
$X_S:=\pi_S(X)$. If $S=\{\ell\}$, we write $X_\ell$ instead of $X_{\{\ell\}}$. Thus, if we
let $G=\GZ$, then under our notation we have $G_\ell=\Gl$ and $G_S=\prod_{\ell\in S}\Gl$;

\item[(ii)]Similarly, given any nonnegative integer $m$ and any subset
$X\subseteq \GZ$, we define $X(m)=r_m(X)\subseteq \Gm$.
\end{itemize}
As a slight abuse, we will use the same notation when working with subgroups of $\Gl$ or $\Gm$.

Let $K$ be a number field with algebraic closure $\Kbar$. We set $G_K:=\GalK$. The set of finite places of $K$ will be denoted $\Sigma_K$. For a rational prime $\ell$, let $S_\ell$ be the set of places of $\Sigma_K$ lying above $\ell$. Next, define $\Sigma_{\Kbar}$ to be the inverse limit of the sets $\Sigma_{K'}$, where $K'$ runs over the finite subextensions of $\Kbar/K$.  Fix a place $v\in\Sigma_K$. The completion at $v$ is denoted by $K_v$,  the residue field at $v$ by $k_v$, and the cardinality of the residue field by $N_v$.  We define $S_v:=\{w\in\Sigma_{\Kbar}\colon w\mid v\}$.  Given $w\in S_v$, the \emph{decomposition group} of $w$ is defined as $D_w:=\{\sigma\in G_K\colon \sigma(w)=w\}$. There is a surjection $D_w\surjects\Galkv$. The kernel of this map is the \emph{inertia group} of $w$, denoted $I_w$. The Frobenius element $\Frob_w$ is the coset of $D_w/I_w$ mapping to the Frobenius element of $\Galkv$. A Galois representation $\rho$ is \emph{unramified at $v$} if $I_w\subseteq\ker\rho$ for some (and hence all) $w\in S_v$. 

Lastly, if $E/K$ is an elliptic curve, we define $S_E$ to be the set of places in $\Sigma_K$ where $E$ has bad reduction.
%**************************************************************************
\subsection*{Acknowledgements} Thanks are due to Bjorn Poonen for suggesting this problem to me, to David Zywina for useful discussions, and to the referee, who suggested phrasing Proposition~\ref{semistableborel} and its corollaries in their present, more general form. This work was partially supported by German Research Society (DFG) grant GR 3448/2-1.

%%*******************************************************
\section{Some (profinite) group theory} In this section we set about proving Theorem~\ref{T:H=G}. As we shall see, every proper closed subgroup $H$ of a profinite group $G$ is contained in a maximal closed subgroup, from which it follows that $H=G$ if and only if $H$ is not contained in any maximal closed subgroup. The necessary and sufficient conditions described in Theorem~\ref{T:H=G} are then a consequence of Proposition~\ref{P:Max} below, which describes the maximal closed subgroups of $\GZ$ in terms of the quotient maps to $\Gl$ and $\GZ^{\ab}$. 

%*************************************************************************
\subsection{Maximal closed subgroups} 

\begin{definition} Let $G$ be a topological group.  A {\it maximal closed 
subgroup
of $G$} is a
closed subgroup $H\subsetneq G$ such that if $K$ is closed and $H\subseteq K\subsetneq G$, then $H=K$.
\end{definition}

\begin{lemma} Let $G$ be a profinite group. Any closed subgroup $H\subsetneq G$ is
contained in a maximal closed subgroup. All maximal closed subgroups of $G$ are open.
\end{lemma}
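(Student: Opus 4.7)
The plan is to leverage the fact that in a profinite group $G$, every closed subgroup $H$ equals the intersection $\bigcap_N HN$ as $N$ ranges over the open normal subgroups of $G$. This holds because the open normal subgroups form a neighborhood basis of the identity, and $HN$ is open (hence closed) for each such $N$.

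The key auxiliary observation I would establish first is this: if $H\subsetneq G$ is a proper closed subgroup, then there exists an open normal $N\triangleleft G$ with $HN\subsetneq G$. Indeed, if instead $HN=G$ for \emph{every} open normal $N$, then $H=\bigcap_N HN=G$, contradicting properness.

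With this in hand, both parts of the lemma fall out quickly. For the first claim, given a proper closed $H\subsetneq G$, pick an open normal $N$ with $HN\subsetneq G$ via the observation above. Then $HN/N$ is a proper subgroup of the \emph{finite} group $G/N$, so it is contained in some maximal subgroup $\overline{M}\subseteq G/N$. Setting $M:=\pi^{-1}(\overline{M})$ for $\pi\colon G\surjects G/N$ gives an open (hence closed) subgroup of $G$ containing $H$. I would then check maximality: any closed subgroup $M'$ with $M\subseteq M'\subsetneq G$ automatically contains $N$, so $\pi(M')$ is a subgroup of $G/N$ properly containing nothing above $\overline{M}$; maximality of $\overline{M}$ forces $\pi(M')=\overline{M}$, and thus $M'=M$.

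For the second claim, let $M\subsetneq G$ be a maximal closed subgroup. Applying the auxiliary observation to $M$ yields an open normal $N$ with $MN\subsetneq G$. Since $MN$ is a closed subgroup satisfying $M\subseteq MN\subsetneq G$, maximality forces $M=MN$, which gives $N\subseteq M$. Thus $M$ contains an open subgroup, so $M$ is open.

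The only place one might get stuck is in trying to prove the first claim by a direct Zorn's lemma argument on closed subgroups containing $H$: the naive choice of upper bound $\overline{\bigcup K_\alpha}$ for a chain need not be proper (consider, e.g., the ascending chain $\prod_{p\le p_n}\ZZ_p\times\{0\}\subset\Zhat$, whose closure is all of $\Zhat$). The point of the approach above is that reducing modulo an open normal subgroup pushes the problem into a finite group, where no such pathology can occur and no transfinite argument is needed.
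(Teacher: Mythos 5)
Your proof is correct and takes essentially the same route as the paper: both use the profinite identity $H=\bigcap_N HN$ (over open normal $N$) to produce an open normal $N$ with $HN\subsetneq G$, pass to the finite quotient $G/N$ to find a maximal subgroup, and pull back. The only cosmetic difference is in deducing openness of maximal closed subgroups — you apply the auxiliary observation directly, while the paper notes that any maximal closed subgroup is itself a proper closed subgroup and hence contained in (and thus equal to) an open one — but the underlying idea is identical.
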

\begin{proof} Let $H$ be any proper closed subgroup of $G$. Since $G$ is profinite, we have $H=\overline{H}=\bigcap\{HN\vert 
N\triangleleft_o
G\}$
( see \cite[0.3.3]{Wilson}). Here $N\triangleleft_o G$ signifies that $N$ is a normal open 
subgroup of $G$. If $HN=G$
for all $N\triangleleft_o G$, then $H=G$, a contradiction. Thus there is a
$N\triangleleft_o G$
such that $H\subseteq HN\subsetneq G$. Now consider the quotient map $\pi\colon G\rightarrow G/N$. Since $N$ is open, the quotient group $G/N$ is finite. Since $HN/N\subsetneq G/N$, there is a maximal subgroup $K\subsetneq G/N$ containing $HN/N$. Then $L=\pi^{-1}(K)$ is a maximal closed subgroup of $G$ containing $HN$, and hence $H$. In fact $L$ is open, since $[G:L]$ is finite. Thus we have proved that every proper closed subgroup is contained in an {\it open} maximal closed subgroup. It follows that maximal closed subgroups are themselves open. 
\end{proof}

Consider now a product of profinite groups $G=\prod_{\alpha\in\Lambda}G_{\alpha}$.  As the projections $\pi_{\alpha}$ are all surjective, we get many maximal closed subgroups of $G$ of the form $\pi_{\alpha}^{-1}(K_{\alpha})$, where $K_{\alpha}\subsetneq G_{\alpha}$ is a maximal closed subgroup of $G_{\alpha}$. Similarly, there are maximal closed subgroups of $G$ arising from the abelianization $G^{\ab}=G/G'$ via the abelianization map $G\rightarrow G/G'$. We show below that under certain technical conditions all maximal closed subgroups of $G$ are accounted for in this way. We will make use of the following notion.   
\begin{definition}Given a profinite group $G$, let $\text{Quo}(G)$ be the set
of isomorphism
classes of finite, nonabelian, simple quotients of $G$.
\end{definition}
\begin{remark} In \cite[IV-25]{Se68} Serre similarly defines $\text{Occ}(G)$ to be the set of (isomorphism classes of) finite nonabelian simple groups $H$ that ``occur'' in $G$, in the sense that there exist closed subgroups $K_{1}\subseteq K_{2}\subseteq G$ with $K_{1}\triangleleft K_{2}$ and $K_{2}/K_{1}\isom H$. We have $\text{Quo}(G)\subseteq\text{Occ}(G)$.  As with Serre's $\text{Occ}$, the operation $\text{Quo}$ behaves well with respect to inverse limits. Namely, If $G=\varprojlim G_{\alpha}$ is an inverse limit of profinite groups, and the maps $G\rightarrow G_{\alpha}$ are all surjective, then $\text{Quo}(G)=\Union_{\alpha\in\Lambda}\text{Quo}(G_{\alpha})$. In particular $\text{Quo}(\prod_{\alpha}G_{\alpha})=\Union\text{Quo}(G_{\alpha})$.
\end{remark}
\begin{proposition}\label{P:Max}
Let $\{G_{\alpha}\}_{\alpha\in\Lambda}$ be a family of profinite groups such that $\text{Quo}(G_{\alpha})\cap\text{Quo}(G_{\alpha'})=\emptyset$ for all $\alpha\ne\alpha'$. Let $G=\prod_{\alpha\in\Lambda}G_{\alpha}$ and suppose $H\subsetneq G$ is a maximal closed subgroup.  Then
either
\begin{itemize}
\item[(i)] $H_{\alpha}=\pi_{\alpha}(H)$ is a maximal closed subgroup of $G_\alpha$ for some
$\alpha$, in which case $H=H_{\alpha}\times\prod_{\alpha'\ne \alpha}G_\alpha$, or

\item[(ii)] $H_{\alpha}=G_\alpha$ for all $\alpha$, in which case $H$ contains $G'$ and the
image of $H$ in $G^{\ab}=G/G'$ is maximal.
\end{itemize}
In other words, all maximal closed subgroups of $G$ arise either from a maximal closed subgroup of $G_{\alpha}$ for some $\alpha\in\Lambda$, or from a maximal closed subgroup of $G^{\ab}=G/G'$. 
\end{proposition}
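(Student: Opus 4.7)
The plan is to case-split on whether some projection $\pi_{\alpha_0}(H) = H_{\alpha_0}$ is a proper subgroup of $G_{\alpha_0}$. In Case (i), if $H_{\alpha_0} \subsetneq G_{\alpha_0}$, then $\pi_{\alpha_0}^{-1}(H_{\alpha_0}) = H_{\alpha_0} \times \prod_{\alpha \ne \alpha_0} G_\alpha$ is a proper closed subgroup of $G$ containing $H$; maximality forces equality, and any strictly intermediate closed subgroup between $H_{\alpha_0}$ and $G_{\alpha_0}$ would pull back to a strictly intermediate subgroup of $G$ between $H$ and $G$, so $H_{\alpha_0}$ is itself maximal.

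For Case (ii), all $\pi_\alpha(H) = G_\alpha$, and the goal $G' \subseteq H$ is equivalent to $HG' = H$. Since $HG'$ is closed (as $G'$ is closed normal, so $HG'$ is compact), maximality gives $HG' \in \{H, G\}$, and the task is to rule out $HG' = G$. Suppose for contradiction $HG' = G$. By the previous lemma $H$ is open, and any open subgroup of the product $G$ contains (the copy embedded with identity on $F$-positions of) $\prod_{\alpha \notin F} G_\alpha$ for some finite $F \subseteq \Lambda$. Passing to the quotient $\overline G := \prod_{\alpha \in F} G_\alpha$ by this open normal subgroup preserves all hypotheses: the image $\overline H$ is still a proper maximal closed subgroup with surjective projections onto each factor, still satisfying $\overline H \cdot \overline G' = \overline G$. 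This reduces us to the case $|F| < \infty$.

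We now induct on $|F|$; the heart is $|F| = 2$, say $\overline G = G_1 \times G_2$ with $\text{Quo}(G_1) \cap \text{Quo}(G_2) = \emptyset$. Goursat's lemma presents $\overline H$ via closed normal subgroups $K_i \triangleleft G_i$ together with an isomorphism $\phi\colon G_1/K_1 \isom G_2/K_2$. A direct computation shows $\overline H \cdot \overline G' = \overline G$ iff $K_i G_i' = G_i$ for both $i$, iff the common quotient $Q := G_1/K_1 \isom G_2/K_2$ is topologically perfect (equal to the closure of its own commutator). If $Q$ is trivial, then $K_i = G_i$ and $\overline H = \overline G$, contradicting properness. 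If $Q$ is nontrivial and topologically perfect, pass to any nontrivial finite continuous quotient of $Q$: continuous finite quotients of topologically perfect groups are perfect, and any nontrivial finite perfect group has a nonabelian finite simple quotient $S$ (take a maximal normal subgroup). The compositions $G_i \surjects G_i/K_i \surjects S$ for $i = 1, 2$ then place $S$ in both $\text{Quo}(G_1)$ and $\text{Quo}(G_2)$, contradicting disjointness. For $|F| > 2$, pick $\alpha_0 \in F$ and write $\overline G = G_{\alpha_0} \times G^*$ with $G^* = \prod_{\alpha \in F, \alpha \ne \alpha_0} G_\alpha$; by the preceding remark $\text{Quo}(G_{\alpha_0}) \cap \text{Quo}(G^*) = \emptyset$. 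If the projection of $\overline H$ to $G^*$ is all of $G^*$, apply the two-factor argument; otherwise maximality of $\overline H$ forces $\overline H = G_{\alpha_0} \times \pi_{G^*}(\overline H)$, and $\pi_{G^*}(\overline H) \subsetneq G^*$ inherits all the hypotheses in the smaller product, completing the induction.

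Once $G' \subseteq H$, the lattice correspondence under the abelianization $G \surjects G^{\ab}$ sends maximal closed subgroups of $G$ containing $G'$ bijectively onto maximal closed subgroups of $G^{\ab}$, so the image of $H$ in $G^{\ab}$ is maximal, finishing Case (ii). The genuinely hard step is the two-factor Goursat analysis, specifically the observation that any nontrivial topologically perfect profinite group admits a nonabelian finite simple quotient --- this is precisely where the disjoint-$\text{Quo}$ hypothesis is used essentially. The open-core reduction to finitely many factors and the inductive collapse from more than two factors to two are routine bookkeeping.
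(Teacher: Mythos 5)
Your proof is correct, and in case (ii) it takes a genuinely different route from the paper. After the shared reduction to a finite index set (openness of $H$) and the observation that $HG'\in\{H,G\}$ by maximality, the paper proves $G'\subseteq H$ \emph{directly}: it first establishes a ``Topological Goursat's Lemma'' specialized to \emph{maximal} $H$, which says the common quotient $G_1/N_1\isom G_2/N_2$ must be \emph{simple}, and then the disjoint-$\text{Quo}$ hypothesis forces this simple group to be abelian, giving $G_i'\subseteq N_i\subseteq H$. You instead run a \emph{contradiction} argument: assume $HG'=G$, apply ordinary (non-maximal) Goursat to get $K_1,K_2,\phi$, observe that $HG'=G$ is equivalent to the common quotient $Q=G_1/K_1\isom G_2/K_2$ being topologically perfect, and then extract a nonabelian finite simple quotient of $Q$ (nontrivial perfect finite group has one via a maximal normal subgroup) contradicting disjointness of $\text{Quo}$. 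So the paper puts the maximality hypothesis to work inside Goursat's lemma to secure simplicity for free, while you invoke maximality only for the dichotomy $HG'\in\{H,G\}$ and compensate by proving and using the elementary fact that topologically perfect profinite groups admit nonabelian finite simple quotients. Both approaches ultimately rest on the same two pillars (Goursat over two factors; the disjoint-$\text{Quo}$ hypothesis), but you trade the paper's specialized Goursat variant for an extra bit of finite group theory, and you obtain perfection from the contradiction hypothesis rather than simplicity from maximality. Your handling of the inductive collapse to two factors is also structured differently (you split on whether $\pi_{G^*}(\overline H)=G^*$, whereas the paper splits on whether $H_{S'}=G_{S'}$), but both are sound bookkeeping. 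One small point worth making explicit if you write this up: the equivalence ``$\overline{H}\cdot\overline{G}'=\overline{G}$ iff $Q$ topologically perfect'' uses that the closed commutator passes through quotients, i.e.\ the image of $G_i'$ in $G_i/K_i$ is $(G_i/K_i)'$, which holds for profinite groups by compactness.
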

The proof of Proposition~\ref{P:Max} will rely on the following variant of Goursat's Lemma.

\begin{lemma}[Topological Goursat's Lemma]Let $G_1,G_2$ be profinite groups, and let $H$ be a maximal closed
subgroup of
$G_1\times G_2$, such that $\pi_i(H)=G_i$ for the two projections $\pi_1$ and
$\pi_2$.
Identifying the $G_i$ with their canonical injections in $G_1\times G_2$, let
$N_i=H\cap G_i$.
Then the $N_i$ are open, normal subgroups of the $G_i$, the quotients $G_i/N_i$ 
are 
simple 
groups, and there is an
isomorphism $\phi:G_1/N_1\simeq G_2/N_2$, whose graph is induced by
$H$.
\end{lemma}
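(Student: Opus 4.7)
The plan is to mimic the classical Goursat lemma while carefully tracking the profinite topology. First I would verify that $N_1 \triangleleft G_1$: given $g_1 \in G_1$, surjectivity of $\pi_1|_H$ produces $g_2 \in G_2$ with $(g_1, g_2) \in H$, and for any $n \in N_1$ (i.e., $(n,1) \in H$), conjugation gives $(g_1, g_2)(n, 1)(g_1, g_2)^{-1} = (g_1 n g_1^{-1}, 1) \in H$, so $g_1 n g_1^{-1} \in N_1$. By symmetry $N_2 \triangleleft G_2$. Next, I define $\phi\colon G_1/N_1 \to G_2/N_2$ by sending $g_1 N_1$ to $g_2 N_2$ whenever $(g_1, g_2) \in H$. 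This is well defined because if $(g_1, g_2), (g_1, g_2') \in H$ then $(1, g_2 (g_2')^{-1}) \in H \cap G_2 = N_2$, and it is a homomorphism by inspection. Surjectivity of $\pi_2|_H$ makes $\phi$ surjective; injectivity follows because $\phi(g_1 N_1) = N_2$ forces some $(g_1, n_2) \in H$ with $n_2 \in N_2$, and combining with $(1, n_2) \in H$ gives $g_1 \in H \cap G_1 = N_1$. The image of $H$ under the quotient map $G_1\times G_2 \to G_1/N_1 \times G_2/N_2$ is exactly the graph of $\phi$; since this image is compact it is closed, and the standard closed-graph argument shows $\phi$ is continuous, hence a topological isomorphism.

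The heart of the argument is using maximality of $H$ to force $G_1/N_1$ to be topologically simple (and symmetrically $G_2/N_2$). I argue by contrapositive. Suppose there is a closed normal subgroup $M_1 \triangleleft G_1$ with $N_1 \subsetneq M_1 \subsetneq G_1$. Transporting across the isomorphism $\phi$ yields a closed normal $M_2 \triangleleft G_2$ with $N_2 \subsetneq M_2 \subsetneq G_2$, and $\phi$ descends to an isomorphism $\bar\phi\colon G_1/M_1 \isom G_2/M_2$. Now set
\[ \widetilde H := \{(g_1, g_2) \in G_1 \times G_2 : \bar\phi(g_1 M_1) = g_2 M_2\}, \]
which is the preimage of the graph of $\bar\phi$ under the quotient $G_1 \times G_2 \to G_1/M_1 \times G_2/M_2$. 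Then $\widetilde H$ is closed and contains $H$. The inclusion $H \subsetneq \widetilde H$ is strict because any $m_1 \in M_1 \setminus N_1$ yields $(m_1, 1) \in \widetilde H \setminus H$, and $\widetilde H \subsetneq G_1 \times G_2$ is strict because $G_2/M_2$ is nontrivial, so the graph of $\bar\phi$ is a proper subgroup of $G_1/M_1 \times G_2/M_2$. This contradicts maximality of $H$.

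I expect the construction of $\widetilde H$ to be the main conceptual obstacle; once one has the right formulation in terms of the factored isomorphism $\bar\phi$, both strict containments become transparent, while the rest of the argument is essentially bookkeeping. Openness of the $N_i$ follows immediately from topological simplicity via a standard fact: a nontrivial profinite group has a proper open normal subgroup (the open normal subgroups intersect to $\{1\}$ by Hausdorffness), so topological simplicity of $G_i/N_i$ forces $G_i/N_i$ to be finite, whence $N_i$ is open in $G_i$.
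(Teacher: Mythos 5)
Your proof is correct, but your argument for simplicity takes a genuinely different route from the paper's, and it is worth contrasting the two. To rule out a closed normal $M_1$ with $N_1 \subsetneq M_1 \subsetneq G_1$, the paper simply forms $HM_1$ (its $N$ is your $M_1$), notes it is closed because $H$ is compact and $M_1$ is closed, observes $H \subsetneq HM_1$ since $M_1 \not\subseteq N_1$, and sees $HM_1 \ne G_1 \times G_2$ via the modular identity $HM_1 \cap G_1 = (H\cap G_1)M_1 = M_1 \ne G_1$. Your construction --- the preimage $\widetilde H$ of the graph of the induced isomorphism $\bar\phi\colon G_1/M_1 \to G_2/M_2$ --- in fact produces the very same subgroup (one can check $\widetilde H = HM_1 = HM_2$), but it makes all three required properties of that subgroup (closed, proper, strictly larger than $H$) visible at a glance from the definition, at the price of first having to build $\phi$ and $\bar\phi$. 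The paper does not need $\phi$ for the simplicity step at all, and constructs it separately from the chain $G_1/N_1 \simeq H/N_1N_2 \simeq G_2/N_2$. The paper also obtains openness of $N_i$ more directly: by the preceding lemma a maximal closed subgroup is open, so $H$ is open and $N_i = H \cap G_i$ is open in $G_i$; you instead derive it downstream from simplicity via the fact that a nontrivial topologically simple profinite group must be finite. Both routes work, but the paper's lets one treat all intermediate normal subgroups in the simplicity argument (not just the closed ones) since $G_i/N_i$ is already known to be a finite discrete group.

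One small omission to note: your simplicity argument excludes proper intermediate closed normal subgroups but never rules out the degenerate case $N_1 = G_1$, i.e., never checks $G_i/N_i$ is nontrivial --- which both ``simple group'' and your openness step require. The paper supplies this: $\phi$ shows $N_1 = G_1$ iff $N_2 = G_2$ iff $H = G_1 \times G_2$, which is excluded by maximality. The same one-line observation closes the gap in your argument.
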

\begin{proof}The proof that the $N_i$ are open and normal is straightforward. 
The
isomorphism
$\phi$ then arises from the chain of isomorphisms $G_1/N_1\simeq H/N_1N_2\simeq
G_2/N_2$.

It remains only to show that the $G_i/N_i$ are simple. The isomorphism $\phi$ implies that $N_1=G_1$ if and only if $N_2=G_2$ if and only
if $H=G_1\times G_2$. Since $H$ is maximal, we see that $N_1\ne G_1$. Now 
suppose
we had
$N_1\subsetneq N\subsetneq G_1$ for some normal subgroup $N\triangleleft G_1$.
Since $N$ is
closed and normal in $G_1$, it is also closed and normal considered as a subgroup of $G_1\times G_2$, in which case $HN$ is
closed and
$H\subsetneq HN$. Furthermore $HN\subsetneq G_{1}\times G_{2}$, since $HN\cap G_1=(H\cap G_1)N=N_1N=N\ne G_1$. This contradicts the fact that $H$ is maximal. Thus there can be no such $N$. This proves that $G_1/N_1$ (and 
hence 
$G_2/N_2$) is simple.
\end{proof}

\begin{proof}[Proof of Proposition~\ref{P:Max}]

If $H_{\alpha}\subsetneq G_{\alpha}$ for some $\alpha$, then $H_{\alpha}$ is maximal in $G_{\alpha}$. Furthermore, since  $H\subseteq H_{\alpha}\times\prod_{\alpha'\ne\alpha}G_{\alpha}\subsetneq G$, we must have  $H=H_{\alpha}\times\prod_{\alpha'\ne\alpha}G_{\alpha}$. 

Assume now that $H_{\alpha}=G_{\alpha}$ for all $\alpha\in\Lambda$. Since $H\subsetneq G$ is open, there is a finite nonempty set $S\subseteq\Lambda$ such that $\ker\pi_{S}\subseteq H$. Since $H$ is maximal, the projection $H_{S}$ is a maximal closed subgroup of $G_{S}$ and $H=H_{S}\times \prod_{\alpha'\notin S}G_{\alpha'}$. As
$G'=\prod_{\alpha\in\Lambda}G_{\alpha}'$, it suffices to prove the corresponding statement for 
$H_{S}$. In 
other words, we need only prove that given any finite set $S\subseteq\Lambda$ and any 
maximal
closed subgroup $H\subseteq G_{S}$,
if $H_{\alpha}=G_{\alpha}$ for all $\alpha\in S$, then
$G_{S}'\subseteq H$.  
We do so using induction on $\vert S \vert$, the case $\vert S\vert=1$ being
trivial.

Assume $\vert S\vert>1$.  Take any $\alpha\in S$ and set $S'=S-\{\alpha\}$.

Suppose $H_{S'}\ne G_{S'}$. Then $H_{S'}$ is maximal and we have
$H=H_{S'}\times G_\alpha$. By induction, $H_{S'}$ contains $G_{S'}'$, and thus $H$
contains $G_{S}'$.

Suppose $H_{S'}=G_{S'}$. Let $N_{S'}=H\cap G_{S'}$ and let
$N_{\alpha}=H\cap G_\alpha$, where we identify $G_{\alpha}$ with $\ker\pi_{S'}$ and $G_{S'}$ with $\ker\pi_{\alpha}$. By the Topological Goursat's Lemma these subgroups are normal
in $G_S$ and there is an isomorphism of simple groups $G_{S'}/N_{S'}\simeq G_\alpha/N_\alpha$. But 
\begin{align*}
\text{Quo}(G_{S'})\cap\text{Quo}(G_{\alpha})&=\text{Quo}(\prod_{\alpha'\in S'}(G_{\alpha'}))\cap\text{Quo}(G_{\alpha})\\
&=\Union_{\alpha'\in S'}\text{Quo}(G_{\alpha'})\cap\text{Quo}(G_{\alpha})\\
&=\emptyset.
\end{align*}
Thus the simple groups $G_{S'}/N_{S'}$ and $G_{\alpha}/N_{\alpha}$ are abelian, in which case $G_{S'}'\subseteq N_{S'}$ and $G_{\alpha}'\subseteq N_{\alpha}$. It follows that $G_{S}'\subseteq H$.
\end{proof}
\begin{corollary}\label{C:Max} Let a $H$ be a maximal closed subgroup of $\GZ=\prod_{\text{$\ell$ prime}}\Gl$. Then either
\begin{itemize}
\item[(i)] $H_{\ell}=\pi_{\ell}(H)$ is a maximal closed subgroup of $\Gl$ for some prime $\ell$ or
\item[(ii)] $H_{\ell}=\Gl$ for all $\ell$, in which case $G'\subseteq H$.
\end{itemize}
\end{corollary}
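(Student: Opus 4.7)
The plan is to invoke Proposition~\ref{P:Max} with the family $\{G_\alpha\}=\{\Gl\}_{\ell \text{ prime}}$, so that $G=\prod_\ell\Gl\simeq\GZ$. Cases (i) and (ii) of the proposition then specialize directly to cases (i) and (ii) of the corollary; case (ii) of the corollary retains only the first half of case (ii) of the proposition (which further asserts that the image of $H$ in $G^{\ab}$ is maximal). So the only substantive task is to verify the hypothesis that $\text{Quo}(\Gl) \cap \text{Quo}(G_{\ell'}) = \emptyset$ for all distinct primes $\ell,\ell'$.

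Since $\text{Quo}(\Gl) \subseteq \text{Occ}(\Gl)$, it suffices to establish the stronger disjointness $\text{Occ}(\Gl) \cap \text{Occ}(G_{\ell'}) = \emptyset$ for $\ell \ne \ell'$. To compute $\text{Occ}(\Gl)$, I would work with the normal filtration
\[ \Gl \supseteq \Sl \supseteq \ker(\Sl \twoheadrightarrow \SL_2(\Fl)) \supseteq \{I\}, \]
whose successive quotients are $\Zl^*$ (abelian, via $\det$), a pro-$\ell$ group (all of whose composition factors are $\ZZ/\ell\ZZ$), and $\SL_2(\Fl)$. For $\ell\in\{2,3\}$ the group $\SL_2(\Fl)$ is solvable (of orders $6$ and $24$ respectively), so $\text{Occ}(\Gl)=\emptyset$. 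For $\ell\ge 5$, the only nonabelian composition factor of $\SL_2(\Fl)$ is $\PSL_2(\Fl)=\SL_2(\Fl)/\{\pm I\}$, which is simple; hence $\text{Occ}(\Gl)\subseteq\{\PSL_2(\Fl)\}$.

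Finally, for $\ell\ge 5$, the order $|\PSL_2(\Fl)|=\ell(\ell^2-1)/2$ is a strictly increasing function of $\ell$, so $\PSL_2(\Fl)\not\simeq\PSL_2(\FF_{\ell'})$ for distinct primes $\ell,\ell'\ge 5$. Combined with the vanishing of $\text{Occ}$ at $\ell=2,3$, this gives the desired disjointness, and Proposition~\ref{P:Max} yields the corollary. The only mildly delicate point is the classical fact that $\SL_2(\Fl)$ contributes exactly one nonabelian composition factor $\PSL_2(\Fl)$ for $\ell\ge 5$, which follows from the simplicity of $\PSL_2(\Fl)$ together with $Z(\SL_2(\Fl))=\{\pm I\}$; everything else is straightforward bookkeeping with the filtration.
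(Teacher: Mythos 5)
There is a genuine gap in your reduction from $\text{Quo}$ to $\text{Occ}$. You observe that $\text{Quo}(\Gl)\subseteq\text{Occ}(\Gl)$ and propose to prove the stronger disjointness $\text{Occ}(\Gl)\cap\text{Occ}(\GL_2(\ZZ_{\ell'}))=\emptyset$; but this stronger statement is \emph{false}, so the approach cannot be repaired by more careful bookkeeping. The error is in the step ``the only nonabelian composition factor of $\SL_2(\Fl)$ is $\PSL_2(\Fl)$, hence $\text{Occ}(\Gl)\subseteq\{\PSL_2(\Fl)\}$.'' Serre's $\text{Occ}$ records simple groups appearing as $K_2/K_1$ for \emph{arbitrary} closed $K_1\triangleleft K_2\subseteq G$ --- in particular any nonabelian simple subgroup of $G$ occurs (take $K_1=\{1\}$) --- so $\text{Occ}$ is not controlled by the composition factors of $G$ alone. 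Concretely, $A_5\simeq\PSL_2(\FF_5)$ is a subgroup of $\PSL_2(\FF_{11})$ (Dickson's list; $11\equiv 1\pmod{10}$), and $\PSL_2(\FF_{11})$ is itself a subquotient of $\GL_2(\ZZ_{11})$, so $\PSL_2(\FF_5)\in\text{Occ}(\GL_2(\ZZ_{11}))$. Since also $\PSL_2(\FF_5)\in\text{Occ}(\GL_2(\ZZ_5))$, the sets $\text{Occ}(\GL_2(\ZZ_5))$ and $\text{Occ}(\GL_2(\ZZ_{11}))$ intersect, and your claimed bound $\text{Occ}(\Gl)\subseteq\{\PSL_2(\Fl)\}$ fails already at $\ell=11$.

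The fix is to stay with $\text{Quo}$, as the paper does. Because a finite simple \emph{quotient} of a group $G$ is necessarily a Jordan--H\"older factor of $G$ (refine a composition series through the kernel), it suffices to know that $\PSL_2(\Fl)$ is the only nonabelian simple group that can appear in a composition series of $\Gln$; this gives $\text{Quo}(\Gln)\subseteq\{\PSL_2(\Fl)\}$ and hence $\text{Quo}(\Gl)=\bigcup_n\text{Quo}(\Gln)\subseteq\{\PSL_2(\Fl)\}$. Your normal filtration of $\Gl$ can also be made to work, but one must argue at the level of quotients: if $N\triangleleft G$ is closed normal with $G/N$ finite and $S$ is a simple quotient of $G$, then $S$ is a quotient of $N$ or of $G/N$, so $\text{Quo}(G)\subseteq\text{Quo}(N)\cup\text{Quo}(G/N)$; applying this to your chain reduces matters to $\text{Quo}(\SL_2(\Fl))$, which equals $\{\PSL_2(\Fl)\}$ for $\ell\ge5$ because the only proper nontrivial normal subgroup of $\SL_2(\Fl)$ is its center. (Incidentally, the successive quotients in your filtration were listed out of order --- they are $\Zl^*$, then $\SL_2(\Fl)$, then the pro-$\ell$ principal congruence subgroup --- but that is cosmetic.)
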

\begin{proof}We need only show that the groups $\Gl$ satisfy the technical condition of the proposition. We have 
\[  \text{Quo}(\Gl)=\text{Quo}(\varprojlim \Gln)=\Union\text{Quo}(\Gln) .\] Now any element of $\text{Quo}(\Gln)$ must appear as one of the factor groups in a Jordan-H\"{o}lder series of $\Gln$. However, as is well known, the only (potentially) simple factor group that appears in a Jordan-H\"older series of $\Gln$ is $\PSL_{2}(\Fl)$ (see \cite[IV-25]{Se68}, for example). Then $\text{Quo}(\Gln)\subseteq\{[\PSL_{2}(\Fl)]\}$, where the brackets denote isomorphism class. Since $\PSL_{2}(\Fl)\not\simeq\PSL_{2}(\FF_{\ell'})$ for $\ell\ne \ell'$, we have $\text{Quo}(\Gl)\cap\text{Quo}(\GL_{2}(\ZZ_{\ell'})=\emptyset$.

\end{proof}
%***********************************************************
\subsection{The abelianization of $\text{GL}_2(\hat{\mathbb{Z})}$} 
Theorem~\ref{T:H=G} follows easily from Corollary~\ref{C:Max} once we have identified $\GZ^{\ab}=\GZ/(\GZ)'$. From the product description $\GZ=\Gprod$, we see immediately that $\GZ'=\prod_{\text{$\ell$ prime}}\Gl'$. So our task is reduced to determining $\Gl'$ for each prime $\ell$. 
\begin{lemma}Let  $\ell\ne 2$ be prime. Then $\Gl'=\SL_{2}(\Zl)=\ker(\Gl\xrightarrow\det\Zl^{*})$. 
\end{lemma}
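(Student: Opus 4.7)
The plan is to establish both inclusions, the easy one directly and the harder one by induction on profinite level. The containment $\Gl' \subseteq \Sl$ is immediate: the determinant $\det\colon \Gl \to \Zl^\times$ is a continuous homomorphism into the abelian group $\Zl^\times$, so every commutator lies in $\ker\det = \Sl$. The real work is in showing $\Sl \subseteq \Gl'$. Since $\Gl'$ is closed in $\Gl$ and $\Gl = \varprojlim_n \Gln$, this reduces to the finite-level statement $\SL_2(\ZZ/\ell^n\ZZ) \subseteq \GL_2(\ZZ/\ell^n\ZZ)'$ for every $n \geq 1$, which I would prove by induction on $n$.

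For the base case $n = 1$, the goal is $\SL_2(\Fl) \subseteq \GL_2(\Fl)'$. When $\ell \geq 5$ this is automatic, since $\SL_2(\Fl)$ is already perfect. The interesting case is $\ell = 3$, where $\SL_2(\Fl)$ is \emph{not} perfect; here the extra room afforded by $\GL_2$ saves the day. A direct computation gives, for any $a \in \Fl^\times$ and $b \in \Fl$,
\[
\left[\begin{pmatrix} a & 0 \\ 0 & 1 \end{pmatrix}, \begin{pmatrix} 1 & b \\ 0 & 1 \end{pmatrix}\right] = \begin{pmatrix} 1 & (a-1)b \\ 0 & 1 \end{pmatrix}.
\]
Choosing $a \ne 1$---possible precisely because $\ell \ne 2$---exhibits every upper-triangular unipotent matrix as a commutator in $\GL_2(\Fl)$. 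An analogous calculation handles the lower-triangular unipotents, and these two families together generate $\SL_2(\Fl)$.

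For the inductive step, I would identify the kernel $U_n$ of $\SL_2(\ZZ/\ell^{n+1}\ZZ) \surjects \SL_2(\ZZ/\ell^n\ZZ)$ with the additive group $\{X \in \ML_2(\Fl) : \tr X = 0\}$ via $I + \ell^n X \mapsto X$. A short calculation shows that for any $g \in \GL_2(\ZZ/\ell^{n+1}\ZZ)$ and any $h = I + \ell^n X \in U_n$,
\[
[g, h] \equiv I + \ell^n(gXg^{-1} - X) \pmod{\ell^{n+1}}.
\]
Taking $g = \mathrm{diag}(a, 1)$ with $a \not\equiv 1 \pmod \ell$ realizes all off-diagonal trace-zero matrices, while taking $g$ a suitable unipotent with an off-diagonal $X$ produces a commutator whose matrix part has nontrivial diagonal. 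Since these span the trace-zero space and the set of such commutators is closed under addition modulo $\ell^{n+1}$, we get $U_n \subseteq \GL_2(\ZZ/\ell^{n+1}\ZZ)'$. Combined with the inductive hypothesis lifted to level $n+1$, this yields $\SL_2(\ZZ/\ell^{n+1}\ZZ) \subseteq \GL_2(\ZZ/\ell^{n+1}\ZZ)'$.

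The main obstacle is the base case at $\ell = 3$: perfectness of $\SL_2(\Fl)$ fails there, so the containment $\SL_2(\Fl) \subseteq \GL_2(\Fl)'$ has to be produced by hand via explicit commutator calculations inside the larger group. Every step of the argument---both in the base case and in the inductive passage---relies on $\Fl^\times$ containing an element distinct from $1$, which is precisely the hypothesis $\ell \ne 2$.
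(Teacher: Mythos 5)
Your proof is correct, and it is worth noting that the paper itself supplies no argument for this lemma at all---it simply cites Lang--Trotter (\cite{LT}, Part~II \S 3, Lemma~1 and Part~III \S 4). So yours is a genuinely self-contained replacement, and it takes the natural route: reduce to finite level, handle level~$1$ by exhibiting the unipotents as commutators of a non-central diagonal element against them, and push up by showing each congruence kernel $U_n \cong \{X \in \ML_2(\Fl) : \tr X = 0\}$ lies in the commutator subgroup. The small checks all go through: $\det(I + \ell^n X) \equiv 1 + \ell^n \tr X \pmod{\ell^{n+1}}$ identifies $U_n$ correctly; the congruence $[g, I + \ell^n X] \equiv I + \ell^n(gXg^{-1} - X) \pmod{\ell^{n+1}}$ uses only $2n \geq n+1$; with $g = \mathrm{diag}(a,1)$, $a \neq 1$, the map $X \mapsto gXg^{-1} - X$ hits all off-diagonal trace-zero matrices, and a single unipotent $g$ supplies, say, $E_{11} - E_{22} - E_{12}$, so together with $E_{12}$ these three span; and the set of $Y$ with $I + \ell^n Y \in \GL_2(\ZZ/\ell^{n+1}\ZZ)'$ is an additive subgroup because $U_n$ is elementary abelian. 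You are also right that the only genuinely delicate base case is $\ell = 3$, where $\SL_2(\FF_3)$ fails to be perfect; the explicit commutator $[\mathrm{diag}(a,1),\,u] = u^{a-1}$ on unipotents $u$ is precisely what rescues it, and that computation needs $a \neq 1$, which is where $\ell \neq 2$ enters (and indeed the statement is false for $\ell = 2$, as the paper's next lemma records). The one cosmetic remark: the lemma is stated for the \emph{closed} commutator subgroup, so you should make explicit that showing $\SL_2(\ZZ/\ell^n\ZZ) \subseteq \GL_2(\ZZ/\ell^n\ZZ)'$ for each $n$ implies $\Sl \subseteq \overline{[\Gl,\Gl]}$; this follows because the image of $\overline{[\Gl,\Gl]}$ in each finite quotient is exactly $\GL_2(\ZZ/\ell^n\ZZ)'$ and a closed subgroup of a profinite group is the inverse limit of its finite-level images, but it deserves a sentence.
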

\begin{proof}See \cite{LT}, Part II, \S 3, Lemma 1 and Part III, \S 4.
\end{proof}
The $\ell=2$ case is slightly subtler.  Recall first that we may identify
$\GL_2(\FF_2)$ with the permutation group $\mathfrak{S}_3$ by considering the 
matrices  as permutations of the three nonzero vectors of $\FF_2\times\FF_2$. This allows us to define a sign map $\sgn:\GL_2(\FF_2)\rightarrow\{\pm 1\}$. By composing with reduction maps, we get sign maps from $\GL_{2}(\ZZ_{2})$ and $\GZ$. By abuse of notation we will denote all of these maps by `$\sgn$'. 

\begin{lemma} The map $(\sgn,\det)\colon\GL_{2}(\ZZ_{2})\rightarrow\{\pm 1\}\times\ZZ_{2}^{*}$ is surjective. We have \[\GL_{2}(\ZZ_{2})'=(\ker\sgn)\cap\SL_{2}(\ZZ_{2})=\ker(\xymatrix{\GL_{2}(\ZZ_{2})\ar[r]^{(\sgn,\det)}&\ \{\pm 1\}\times\ZZ_{2}^{*}}).\]
\end{lemma}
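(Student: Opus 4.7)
Surjectivity of $(\sgn,\det)$ is a direct check: the matrix $\begin{pmatrix} d & 0 \\ 0 & 1 \end{pmatrix}$ maps to $(+1,d)$, and multiplying it by $\begin{pmatrix} 0 & -1 \\ 1 & 0 \end{pmatrix}$ (which has determinant $1$ and reduces mod $2$ to a transposition in $\GL_2(\FF_2)\cong S_3$) produces an element mapping to $(-1,d)$. The inclusion $\GL_{2}(\ZZ_2)'\subseteq\ker(\sgn,\det)$ is automatic because the target is abelian, so only the reverse inclusion requires work.

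For that inclusion I would pass to the finite quotients $G_n:=\GL_2(\ZZ/2^n\ZZ)$. Since the topological commutator subgroup of a profinite group is the inverse limit of the commutator subgroups of its finite quotients (a standard consequence of compactness applied to the surjections $\GL_2(\ZZ_2)\to G_n$), it suffices to prove
$$[G_n,G_n] \;=\; \ker(\sgn,\det)\cap G_n \qquad\text{for every } n\ge 1,$$
and then take the inverse limit. I would argue by induction on $n$, the base case $n=1$ being the classical identity $[S_3,S_3]=A_3=\ker\sgn$ inside $G_1=\GL_2(\FF_2)\cong S_3$. For the inductive step, I would use the short exact sequence $1\to K_n\to G_{n+1}\to G_n\to 1$, where $K_n=\Gamma(2^n)/\Gamma(2^{n+1})$ is identified with the additive group $(M_2(\FF_2),+)$ via $I+2^nX\mapsto X$. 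Since $[G_{n+1},G_{n+1}]$ surjects onto $[G_n,G_n]$ and $K_n\subseteq\Gamma(2)\subseteq\ker\sgn$, the inductive claim reduces to the equality $[G_{n+1},G_{n+1}]\cap K_n = \ker\det\cap K_n$, and the expansion $\det(I+2^nX)\equiv 1+2^n\tr X\pmod{2^{n+1}}$ identifies the right-hand side with the trace-zero subspace $\mathfrak{sl}_2(\FF_2)\subseteq M_2(\FF_2)$.

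The heart of the argument is then the commutator identity
$$[g,\,I+2^nX]\;\equiv\; I+2^n(gXg^{-1}-X)\pmod{\Gamma(2^{n+1})}\qquad(n\ge 1),$$
which is valid because the error term in the expansion of $(I+2^nX)^{-1}$ is of order $2^{2n}\in 2^{n+1}\ZZ_2$. Since $gXg^{-1}-X$ has trace zero, this gives the inclusion ``$\subseteq$'' for free. For the reverse inclusion I need to verify that the elements $gXg^{-1}-X$ $\FF_2$-linearly span all of $\mathfrak{sl}_2(\FF_2)$, which is a finite check using a handful of explicit pairs (for instance, $g$ a transposition matrix with $X$ a standard diagonal idempotent, and $g$ an upper/lower unipotent with $X$ off-diagonal). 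This last $\FF_2$-span verification is the only place real computation is needed and is the main technical point of the proof — it is also where the argument is sensitive to $p=2$, since the trace-zero subspace of $M_2(\FF_2)$ is three-dimensional (not two-dimensional as in odd characteristic), so one must be careful to produce three linearly independent commutators rather than just exhibit a single non-scalar one as in the odd-prime case of the previous lemma.
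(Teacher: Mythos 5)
The paper disposes of this lemma with a citation to Lang--Trotter (\cite{LT}, Part III, \S 2), so you are supplying a proof where the paper gives none. Your overall strategy — reduce to the finite quotients $G_n=\GL_2(\ZZ/2^n\ZZ)$, induct on $n$ using the abelian kernel $K_n\cong(M_2(\FF_2),+)$, and lift via the commutator identity $[g,I+2^nX]\equiv I+2^n(gXg^{-1}-X)\pmod{2^{n+1}}$ — is sound. The reduction to finite quotients (closure of $[G,G]$ equals $\varprojlim[G_n,G_n]$) is a standard compactness fact; the base case $[\GL_2(\FF_2),\GL_2(\FF_2)]=A_3$ is correct, noting $\det$ is automatically trivial at level $1$; the inductive step is set up correctly (since both $[G_{n+1},G_{n+1}]$ and $\ker(\sgn,\det)$ surject onto $[G_n,G_n]$, equality reduces to equality of their intersections with $K_n$); the determinant expansion $\det(I+2^nX)\equiv 1+2^n\tr X\pmod{2^{n+1}}$ is right for $n\ge 1$; and the span computation does work (e.g.\ $g=\left(\begin{smallmatrix}0&1\\1&0\end{smallmatrix}\right)$, $X=E_{11}$ gives $I$; $g=\left(\begin{smallmatrix}1&1\\0&1\end{smallmatrix}\right)$, $X=E_{11}$ gives $E_{12}$; and by transposition $E_{21}$). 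This is a more explicit route than the paper's approach elsewhere, which relies on Lang--Trotter's lifting lemmas (the paper's Lemma~\ref{L:ref1}) rather than on a direct commutator span.

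One factual slip in your commentary: you say the trace-zero subspace of $M_2(\FF_2)$ is ``three-dimensional (not two-dimensional as in odd characteristic).'' In fact $\mathfrak{sl}_2$ is three-dimensional over $\FF_\ell$ for \emph{every} prime $\ell$. What actually distinguishes $\ell=2$ is that $\tr I=2\equiv 0$, so $\FF_2 I\subseteq\mathfrak{sl}_2(\FF_2)$ is a proper $\GL_2(\FF_2)$-invariant line, making the adjoint module reducible — and it is at level $1$, via $[\GL_2(\FF_2),\GL_2(\FF_2)]=A_3\subsetneq\SL_2(\FF_2)$, that the commutator subgroup drops to index $2$ inside $\SL_2(\ZZ_2)$, not at the higher levels. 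This misstatement does not affect the validity of the argument, since you verify the span directly rather than appealing to irreducibility, but it should be corrected.
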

\begin{proof}See \cite{LT}, Part III, \S 2. 
\end{proof}
Combining the two lemmas yields:
\begin{proposition}\label{P:Gab} The map $(\sgn,\det)\colon\GZ\rightarrow\Gab$ is surjective. The commutator subgroup of $\GZ$ is $\GZ'=\ker(\sgn,\det)$. We may identify the abelianization $\GZ\rightarrow\GZ^{\ab}$ with 
\[
\xymatrix{\GZ\ar [r]^{(\sgn,\det)}&\ \Gab}
\]
\end{proposition}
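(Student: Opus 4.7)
The plan is to deduce the proposition directly from the two lemmas by reducing everything to the prime-by-prime description $\GZ=\prod_{\ell}\Gl$. The key observation, already noted at the start of the subsection, is that the closed commutator subgroup of a profinite product is the product of the closed commutators: $\GZ'=\prod_{\ell}\Gl'$. Granted this, the proof becomes a matter of assembly.

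First, I would verify surjectivity of $(\sgn,\det)\colon\GZ\to\Gab=\{\pm1\}\times\Zhat^{*}$. Surjectivity of $\det\colon\GZ\to\Zhat^{*}$ reduces, via the product decomposition, to surjectivity of $\det\colon\Gl\to\Zl^{*}$ for each $\ell$, which is immediate from diagonal matrices. The sign $\sgn$ factors through the $\ell=2$ projection, so the joint surjectivity of $(\sgn,\det)$ follows by combining the surjectivity of $\det$ on the $\ell\neq 2$ factors with the joint surjectivity of $(\sgn,\det)$ on $\GL_{2}(\ZZ_{2})$ supplied by the second lemma.

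Next, I would identify the kernel. By the first lemma, $\Gl'=\ker(\det\colon\Gl\to\Zl^{*})$ for $\ell\neq 2$, and by the second lemma $\GL_{2}(\ZZ_{2})'=\ker((\sgn,\det)\colon\GL_{2}(\ZZ_{2})\to\{\pm1\}\times\ZZ_{2}^{*})$. Using $\GZ'=\prod_{\ell}\Gl'$, an element $(g_{\ell})\in\GZ$ lies in $\GZ'$ iff $g_{2}\in\ker(\sgn,\det)$ and $\det g_{\ell}=1$ for all $\ell\neq 2$, which is precisely the condition $(\sgn,\det)(g_{\ell})=(1,1)$ in $\Gab$. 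Hence $\GZ'=\ker(\sgn,\det)$, and the induced map $\GZ/\GZ'\to\Gab$ is an isomorphism, which is the asserted identification of the abelianization map.

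I do not anticipate any real obstacle. The only point requiring a moment of care is the assertion $\GZ'=\prod_{\ell}\Gl'$ for the \emph{closed} commutator subgroup; the inclusion $\subseteq$ is clear because each commutator $[x,y]$ in $\GZ$ has $\ell$-component $[x_{\ell},y_{\ell}]\in\Gl'$, and the reverse inclusion follows because any tuple $(c_{\ell})$ with $c_{\ell}\in\Gl'$ is approximated in the product topology by tuples with only finitely many nontrivial entries, each of which visibly lies in the commutator subgroup of the corresponding finite subproduct, hence in the closure of $[\GZ,\GZ]$.
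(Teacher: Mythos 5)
Your proof is correct and follows exactly the route the paper intends: the paper states the proposition immediately after the two lemmas with the phrase ``Combining the two lemmas yields,'' and the only preparatory remark it makes is the same product identity $\GZ'=\prod_{\ell}\Gl'$ that you use. You have merely spelled out the assembly (and supplied a justification for the closed-commutator product identity that the paper takes as immediate), so there is no substantive difference in approach.
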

We can now prove our first theorem. 
\begin{proof}[Proof of Theorem~\ref{T:H=G}] If $H=\GZ$, then conditions (i) and (ii) obviously hold. Suppose $H\subsetneq\GZ$ and $\pi_{\ell}(H)=\Gl$ for all primes $\ell$. Then there is a maximal closed subgroup $K$ with $H\subseteq K\subsetneq G$.   Clearly $K$ also satisfies $\pi_{\ell}(K)=\Gl$ for all prime $\ell$. Then $K$ contains the commutator subgroup $\GZ'=\ker(\sgn,\det)$, by Proposition~\ref{P:Max}. Since $K\ne\GZ$, we have $(\sgn,\det)(K)\ne\Gab$. Since $H\subseteq K$, we also have $(\sgn,\det)(H)\ne \Gab$. 
\end{proof}
%**************************************************
\subsection{Maximal closed subgroups of $\text{GL}_{2}(\hat{\ZZ})$}
It will be useful in what follows to have a more detailed picture of the maximal closed subgroup structure of $\GZ$.  According to Propositions~\ref{P:Max} and \ref{P:Gab}, we may proceed by examining the maximal closed subgroups of $\Gl$ and $\GZ^{\ab}\simeq\Gab$. 

For the most part we will be concerned with maximal closed subgroups $H\subsetneq\GZ$ for which the determinant map is surjective. Of course, maximal closed subgroups with $\det(H)\ne\Zhat^{*}$ correspond to maximal closed subgroups of $\Zhat^{*}$. These in turn are neatly described by class field theory via the isomorphism $\Zhat^{*}\simeq \Gal(\QQ^\ab/\QQ)$.

\subsubsection{Maximal closed subgroups arising from $\Gab$} \label{SSS:Gab} Let $H\subsetneq\GZ$ be a maximal closed subgroup such that $H_\ell=\Gl$ for all $\ell$ and $\det(H)=\Zhat^{*}$. By Corollary~\ref{C:Max} and the definition of $(\sgn,\det)$, this $H$ corresponds to a maximal subgroup $\Gab$ that surjects onto the two factors $\{\pm 1\}$ and $\Zhat^*$. It follows easily that the corresponding subgroup is the kernel of a character $\Gab\rightarrow\{\pm 1\}$ of the form $(\id,\chi)$, for some nontrivial character $\chi\colon\Zhat^*\colon\rightarrow\{\pm 1\}$. In other words, our original $H\subsetneq\GZ$ is the kernel of a character of the form $\sgn\cdot(\chi\circ\det)$ for some nontrivial character $\chi\colon\Zhat^*\rightarrow\{\pm 1\}$; that is $H=H_\chi:=\{g\in\GZ\colon \sgn(g)=\chi(\det(g))\}$. We call $H_\chi$ the \emph{Serre subgroup of $\GZ$ with character $\chi$}. 

%%%%%%%%%%%%%%%%%%%%%%%%%%%%%%%%%%%%%%%%%%%%%%%%%%%

\subsubsection{Maximal closed subgroups arising from $\text{GL}_{2}(\ZZ_{\ell})$.}\label{SSS:Gl} Suppose now that our maximal closed subgroup corresponds to a subgroup $H\subsetneq\Gl$. Set $M:=\ML_{2}(\ZZ_{\ell})$. The open normal subgroups $V_{\ell^{n}}:=I+\ell^{n}M$ constitute a fundamental basis of open neighborhoods of the identity in $\Gl$. For $n\geq 1$ the quotient $V_{\ell^n}/V_{\ell^{n+1}}$ is isomorphic to $\ML_2(\Fl)$, and comes equipped with a $\GL_2(\Fl)$-module structure; multiplication by $g\in\GL_2(\Fl)$ is defined as $g\cdot (I+\ell^nA):=I+\ell^nGAG^{-1}$, where $G$ is any lift of $g$ to $\GL_2(\ZZ/\ell^{n+1}\ZZ)$.  Now since $H$ is open, it must contain $V_{\ell^{n}}$ for some $n$, in which case $H$ corresponds to the maximal subgroup $H(\ell^{n})\subsetneq\Gln$. How big must $n$ be before we can see this correspondence? This question is answered by the following lemmas and corollaries.
\begin{lemma}{\cite[Part I, \S 6, Lemmas 2-3]{LT}} \label{L:ref1} Let $U\subseteq V_{\ell}=I+\ell\ML_{2}(\Zl)\subseteq\Gl$. 
\begin{itemize}
\item[(i)]If $\ell$ is odd and $U\surjects V_{\ell}/V_{\ell^{2}}$, then $U=V_{\ell}$.
\item[(ii)]If $\ell=2$, and $U\cap V_{4}\surjects V_{4}/V_{8}$, then $U\cap V_{4}=V_{4}$. If in addition $U\surjects V_{2}/V_{8}$, then $U=V_{2}$. 
\end{itemize}
\end{lemma}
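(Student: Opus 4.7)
The plan is to leverage the pro-$\ell$ structure of $V_\ell$ (for odd $\ell$) and of $V_4$ (for $\ell = 2$) by computing how the $\ell$-th power map interacts with the filtration $\{V_{\ell^n}\}$. The argument then reduces to a bootstrap induction, very much in the spirit of a Frattini-style argument for pro-$\ell$ groups.

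The first step is to nail down the key power identity. For $A \in M = \ML_2(\Zl)$ and $n \geq 1$, the binomial expansion yields
\[
(I + \ell^n A)^\ell = I + \ell^{n+1}A + \sum_{k=2}^{\ell}\binom{\ell}{k}\ell^{nk}A^k.
\]
For odd $\ell$ and $n \geq 1$ every term in the sum lies in $\ell^{n+2}M$: the coefficients $\binom{\ell}{k}$ with $1 \leq k \leq \ell-1$ are divisible by $\ell$, and the $k = \ell$ term contributes $\ell^{n\ell}A^\ell$ with $n\ell \geq n+2$. Hence $(I + \ell^n A)^\ell \equiv I + \ell^{n+1}A \pmod{V_{\ell^{n+2}}}$. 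For $\ell = 2$ only the $4^n A^2$ term survives, so the analogous congruence $(I + 2^n A)^2 \equiv I + 2^{n+1}A \pmod{V_{2^{n+2}}}$ holds precisely when $n \geq 2$.

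For (i), assume $\ell$ is odd and the closed subgroup $U \subseteq V_\ell$ surjects onto $V_\ell/V_{\ell^2}$. I prove by induction on $k \geq 1$ that $U V_{\ell^{k+1}} = V_\ell$, the base case $k = 1$ being the hypothesis. For the inductive step it suffices to show $V_{\ell^k} \subseteq U V_{\ell^{k+1}}$. Given $I + \ell^k B \in V_{\ell^k}$, use the base case to pick $u = I + \ell C \in U$ with $C \equiv B \pmod \ell$. Iterating the power identity, $u^{\ell^{k-1}} \equiv I + \ell^k C \equiv I + \ell^k B \pmod{V_{\ell^{k+1}}}$, so $u^{\ell^{k-1}} \in U$ represents the desired coset. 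Closedness of $U$ together with $\bigcap_k V_{\ell^{k+1}} = \{I\}$ then give $U = V_\ell$.

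For (ii) the first assertion follows by running the same induction with $V_4$ in place of $V_\ell$ and the $\ell = 2$ power identity (valid from level $n = 2$), yielding $U \cap V_4 = V_4$. For the second assertion, since $V_4 \subseteq U$ we have $U \cdot V_4 = U$; and the surjection $U \twoheadrightarrow V_2/V_8$ a fortiori gives $U \cdot V_4 = V_2$ (because $V_4 \supseteq V_8$), whence $U = V_2$. The one genuine subtlety is the asymmetry at $\ell = 2$: the identity $(I + 2A)^2 = I + 4(A + A^2)$ mixes linear and quadratic contributions at level $4$, so squaring does not shift the filtration cleanly starting from $V_2$. This is precisely why (ii) must be bifurcated into a bootstrap on $V_4$ (where squaring behaves well) plus a separate argument for the $V_2/V_4$ quotient, and it constitutes the main (if mild) obstacle in the proof.
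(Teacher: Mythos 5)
The paper gives no proof of Lemma~\ref{L:ref1}; it simply cites Lang--Trotter. Your argument is correct and is the standard one: the binomial power identity $(I+\ell^n A)^\ell \equiv I + \ell^{n+1}A \pmod{V_{\ell^{n+2}}}$ (valid for $n\ge 1$ when $\ell$ is odd, and for $n\ge 2$ when $\ell=2$), together with a Frattini-style bootstrap up the filtration $\{V_{\ell^n}\}$ and a final passage to the limit using closedness of $U$. The bifurcation in part~(ii) is correctly explained by the failure of this identity at level $n=1$ when $\ell=2$, and the concluding deduction of $U=V_2$ from $V_4\subseteq U$ and $UV_8=V_2$ is sound. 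One caveat worth making explicit (as your argument implicitly does, though the paper's statement does not): $U$ must be assumed to be a \emph{closed} subgroup --- a dense proper subgroup would otherwise be a counterexample --- consistent with how the lemma is invoked in Corollary~\ref{C:ref}.
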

\begin{lemma}{\cite[IV-23]{Se68}}\label{L:ref2} Let $\ell\geq 5$. Suppose $H\subseteq\Sl$ is a closed subgroup such that $H\surjects\SL_{2}(\Fl)$. Then $H=\Sl$. 
\end{lemma}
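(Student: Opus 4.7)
The plan is to show directly that $V_\ell \cap \SL_2(\ZZ_\ell) \subseteq H$; combined with the hypothesis that $H$ surjects onto $\SL_2(\FF_\ell) = \SL_2(\ZZ_\ell)/(V_\ell \cap \SL_2(\ZZ_\ell))$, this gives $H = \SL_2(\ZZ_\ell)$ immediately. To obtain the containment, I would invoke the analog of Lemma~\ref{L:ref1}(i) for $\SL_2$, reducing the task to showing that the image of $H \cap V_\ell$ in
\[
(V_\ell \cap \SL_2(\ZZ_\ell))/(V_{\ell^2} \cap \SL_2(\ZZ_\ell)) \isom \mathfrak{sl}_2(\FF_\ell)
\]
is all of $\mathfrak{sl}_2(\FF_\ell)$, the additive group of trace-zero $2 \times 2$ matrices over $\FF_\ell$.

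Let $N \subseteq \mathfrak{sl}_2(\FF_\ell)$ be this image. Conjugation by $H$ preserves $N$, and since this action factors through $H \surjects \SL_2(\FF_\ell)$ and coincides there with the adjoint representation, $N$ is an $\SL_2(\FF_\ell)$-submodule. The crucial representation-theoretic input, and the precise point where the hypothesis $\ell \geq 5$ enters, is that the adjoint representation on $\mathfrak{sl}_2(\FF_\ell)$ is \emph{irreducible} as an $\SL_2(\FF_\ell)$-module (this fails for $\ell = 2, 3$, where one finds proper nonzero invariant subspaces). Consequently either $N = 0$ or $N = \mathfrak{sl}_2(\FF_\ell)$, and the latter case finishes the proof.

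The main obstacle is excluding $N = 0$. In that case $H \cap V_\ell \subseteq V_{\ell^2}$, so the reduction $H(\ell^2) \to \SL_2(\FF_\ell)$ is an isomorphism, producing a group-theoretic section of the (non-central) extension
\[
1 \longrightarrow \mathfrak{sl}_2(\FF_\ell) \longrightarrow \SL_2(\ZZ/\ell^2\ZZ) \longrightarrow \SL_2(\FF_\ell) \longrightarrow 1.
\]
I would rule out the existence of such a section by a direct order calculation. The unipotent $\bar g = \left(\begin{smallmatrix} 1 & 1 \\ 0 & 1 \end{smallmatrix}\right) \in \SL_2(\FF_\ell)$ has order $\ell$, yet every lift $g(1 + \ell B)$ of $\bar g$ to $\SL_2(\ZZ/\ell^2\ZZ)$ (with $B \in \mathfrak{sl}_2$) satisfies, by a binomial expansion modulo $\ell^2$,
\[
(g(1+\ell B))^\ell \equiv g^\ell \cdot \Bigl(1 + \ell \sum_{i=0}^{\ell-1} g^{-i} B g^i\Bigr) \pmod{\ell^2}.
\]
The adjoint-orbit sum $\sum_i g^{-i} B g^i$ vanishes modulo $\ell$ (the calculation reduces to $\binom{\ell}{2}, \binom{\ell}{3} \equiv 0 \pmod{\ell}$, where one needs $\ell \geq 5$), while $g^\ell = I + \ell \left(\begin{smallmatrix} 0 & 1 \\ 0 & 0 \end{smallmatrix}\right) \neq I$ in $\SL_2(\ZZ/\ell^2\ZZ)$. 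Thus no lift of $\bar g$ has order $\ell$, contradicting the existence of a section and forcing $N = \mathfrak{sl}_2(\FF_\ell)$.
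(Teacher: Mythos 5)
Your argument is correct. The paper supplies no proof of its own here---it simply cites Serre~\cite[IV-23]{Se68}---and your reconstruction follows the standard line: a Frattini-type reduction to the mod~$\ell^2$ level, irreducibility of the adjoint module $\mathfrak{sl}_2(\FF_\ell)$, and an order computation ruling out a splitting of $\SL_2(\ZZ/\ell^2\ZZ)\rightarrow\SL_2(\FF_\ell)$.

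One factual slip is worth correcting, since you flag it as ``the precise point where $\ell\geq 5$ enters.'' The adjoint representation of $\SL_2(\FF_\ell)$ on $\mathfrak{sl}_2(\FF_\ell)$ is in fact irreducible for \emph{every} odd prime, including $\ell=3$: the unipotent $u=\left(\begin{smallmatrix}1&1\\0&1\end{smallmatrix}\right)$ fixes only the line spanned by $E=\left(\begin{smallmatrix}0&1\\0&0\end{smallmatrix}\right)$, its transpose fixes only the line spanned by $F=\left(\begin{smallmatrix}0&0\\1&0\end{smallmatrix}\right)$, so there is no invariant line; and the $\SL_2$-invariant trace form $(X,Y)\mapsto\tr(XY)$ is nondegenerate once $\ell\neq 2$, so no invariant plane exists either. (Irreducibility genuinely fails only at $\ell=2$, where $2\lambda=0$ forces the scalar line into $\mathfrak{sl}_2(\FF_2)$.) Thus the unique place where $\ell\geq 5$ is indispensable is exactly the \emph{second} place you identify, the order computation: one has $\sum_{i=0}^{\ell-1}i=\binom{\ell}{2}\equiv 0\pmod\ell$ already for $\ell\geq 3$, but $\sum_{i=0}^{\ell-1}i^{2}=\tfrac{(\ell-1)\ell(2\ell-1)}{6}$ is $\not\equiv 0\pmod\ell$ for $\ell=2,3$ (e.g.\ $0+1+4\equiv 2\pmod 3$), which is what allows $u$ to acquire a lift of order $\ell$ in $\SL_2(\ZZ/\ell^2\ZZ)$ when $\ell\leq 3$. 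Everything else in your argument is sound.
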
 
\begin{corollary}\label{C:ref}Let $H\subseteq \Gl$ be a closed subgroup. 
\begin{itemize}
\item[(i)] If $\ell=2$ and  $H\surjects\GL_{2}(\ZZ/8\ZZ)$, then $H=\Gl$.
\item[(ii)] If $\ell$ is odd and $H\surjects\GL_{2}(\ZZ/\ell^2\ZZ)$, then $H=\Gl$. 
\item[(iii)] If $\ell\geq 5$, $H\surjects\Gfl$ and $\det(H)=\ZZ_\ell^{*}$, then $H=\Gl$. 
\end{itemize}
\end{corollary}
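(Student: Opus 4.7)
The plan is to derive all three parts from Lemmas~\ref{L:ref1} and~\ref{L:ref2}, using throughout that a closed subgroup $H\subseteq\Gl$ equals $\Gl$ as soon as its image in every finite quotient $\Gl/V_{\ell^n}$ is all of $\Gl/V_{\ell^n}$. The strategy for (i) and (ii) is to upgrade the finite-level hypothesis to the statement that $H$ already contains an entire congruence layer $V_\ell$ (respectively $V_2$), and then combine this with surjectivity modulo $\ell$; part (iii) instead routes through the commutator.

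For (ii), I set $U:=H\cap V_\ell$ and check that $U$ surjects onto $V_\ell/V_{\ell^2}$: given $\bar v\in V_\ell/V_{\ell^2}$, lift to $v\in V_\ell$, and by the hypothesis $H\surjects\Gl/V_{\ell^2}$ pick $h\in H$ with $h\equiv v\pmod{\ell^2}$; such an $h$ automatically lies in $V_\ell\cap H=U$. Lemma~\ref{L:ref1}(i) then forces $U=V_\ell$, so $V_\ell\subseteq H$. Since $H$ also surjects onto $\Gfl=\Gl/V_\ell$ (a further quotient of $\GL_2(\ZZ/\ell^2\ZZ)$), we get $H=\Gl$. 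Part (i) follows the same pattern with $U:=H\cap V_2$: the lifting argument shows both $U\cap V_4\surjects V_4/V_8$ and $U\surjects V_2/V_8$, so Lemma~\ref{L:ref1}(ii) yields $U=V_2$, and surjectivity of $H$ onto $\GL_2(\FF_2)=\Gl/V_2$ finishes things.

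For (iii), I would pass to the topological commutator subgroup $H'$. It is closed, and by the first lemma of the previous subsection (recall $\ell$ is odd here) we have $H'\subseteq\Gl'=\Sl$. Moreover, the image of $H'$ under reduction mod $\ell$ is the topological commutator of the image of $H$, which equals $\Gfl'=\SL_2(\FF_\ell)$; the second equality uses $\ell\geq 5$, since then $\PSL_2(\FF_\ell)$ is simple nonabelian and $\SL_2(\FF_\ell)$ is perfect. Lemma~\ref{L:ref2} now applies to $H'\subseteq\Sl$ and yields $H'=\Sl$, so $\Sl\subseteq H$. Finally, $\Gl/\Sl\simeq\ZZ_\ell^{*}$ via the determinant, and the hypothesis $\det(H)=\ZZ_\ell^{*}$ forces $H=\Gl$.

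The only delicate point is the hypothesis $\ell\geq 5$ in (iii), which is needed both to ensure that $\SL_2(\FF_\ell)$ is perfect (so that the mod-$\ell$ image of $H'$ really is $\SL_2(\FF_\ell)$) and to invoke Lemma~\ref{L:ref2}; everything else amounts to a routine diagram chase through the congruence filtration $\{V_{\ell^n}\}$.
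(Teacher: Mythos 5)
Your proposal is correct and follows essentially the same route as the paper: parts (i) and (ii) are handled by the observation that $H\twoheadrightarrow\Gl/V_{\ell^n}$ forces $H\cap V_{\ell^r}\twoheadrightarrow V_{\ell^r}/V_{\ell^n}$, after which Lemma~\ref{L:ref1} pins down the relevant congruence layer, and part (iii) passes to the topological commutator $H'\subseteq\Sl$ and applies Lemma~\ref{L:ref2}. Your only addition is to spell out the lifting step and the compactness/perfectness details that the paper leaves implicit; the substance is identical.
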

\begin{proof}The first two statements are simple consequences of Lemma~\ref{L:ref1} and the observation that if $H\surjects\GL_{2}(\ZZ/\ell^{n}\ZZ)\simeq\Gl/V_{\ell^{n}}$, then $(H\cap V_{\ell^{r}})\surjects V_{\ell^{r}}/V_{\ell^{n}}$ for any $r<n$. 

To prove the third statement, we need only show that $\SL_{2}(\Zl)\subseteq H$. Since $H\surjects\Gfl$, we also have $H'\surjects\Gfl'=\SL_{2}(\Fl)$. Then $H'\subseteq\Gl'=\Sl$ is a closed subgroup of $\Sl$ which surjects onto $\SL_{2}(\Fl)$.  Thus $H'=\Sl$, by Lemma~\ref{L:ref2}, and we see that $\Sl\subseteq H$, as desired.
\end{proof}   
\begin{corollary}\label{C:MaxClass} The maximal closed subgroups of $H\subsetneq\Gl$ are in 1-1 correspondence with 
\begin{itemize}
\item[(i)] the maximal subgroups of $\GL_{2}(\ZZ/8\ZZ)$, if $\ell=2$;
\item[(ii)] the maximal subgroups of $\GL_{2}(\ZZ/\ell^2\ZZ)$, if $\ell$ is odd.
\end{itemize}
For $\ell\geq 5$ the maximal closed subgroups of $\Gl$ with surjective determinant are in 1-1 correspondence with the maximal subgroups of $\Gfl$ with surjective determinant.

\end{corollary}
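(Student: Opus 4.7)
The plan is to deduce Corollary~\ref{C:MaxClass} formally from Corollary~\ref{C:ref} via the reduction/pullback correspondence along $r_{\ell^n}\colon\Gl\surjects\GL_2(\ZZ/\ell^n\ZZ)$, where I take $n=3$ for $\ell=2$ and $n=2$ for $\ell$ odd. The mutually inverse assignments $H\mapsto H(\ell^n)$ and $K\mapsto r_{\ell^n}^{-1}(K)$ should provide the claimed bijection. Corollary~\ref{C:ref} supplies exactly the input required: $n$ is chosen to be the smallest exponent for which ``$H$ surjects mod $\ell^n$'' already forces $H=\Gl$.

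For the forward direction, given a maximal closed $H\subsetneq\Gl$, I would first invoke Corollary~\ref{C:ref}(i) or (ii) to conclude $H(\ell^n)\subsetneq\GL_2(\ZZ/\ell^n\ZZ)$. Hence $r_{\ell^n}^{-1}(H(\ell^n))$ is a proper closed subgroup of $\Gl$ containing $H$, and maximality of $H$ forces $H=r_{\ell^n}^{-1}(H(\ell^n))$, so in particular $V_{\ell^n}\subseteq H$. The maximality of $H(\ell^n)$ in $\GL_2(\ZZ/\ell^n\ZZ)$ is then a routine lattice translation: any intermediate $H(\ell^n)\subsetneq K\subsetneq\GL_2(\ZZ/\ell^n\ZZ)$ pulls back to a proper intermediate $H\subsetneq r_{\ell^n}^{-1}(K)\subsetneq\Gl$, contradicting maximality of $H$.

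For the reverse direction, given a maximal subgroup $K\subsetneq\GL_2(\ZZ/\ell^n\ZZ)$, I would check that $r_{\ell^n}^{-1}(K)$ is closed, proper, and maximal by the symmetric argument: any closed $H'$ with $r_{\ell^n}^{-1}(K)\subsetneq H'\subsetneq\Gl$ gives $K\subsetneq H'(\ell^n)$, and since $H'\ne\Gl$ rules out $H'(\ell^n)=\GL_2(\ZZ/\ell^n\ZZ)$ by Corollary~\ref{C:ref}, maximality of $K$ forces $H'(\ell^n)=K$ and hence $H'=r_{\ell^n}^{-1}(K)$, a contradiction. For the final assertion about $\ell\geq 5$ and surjective determinant, the identical argument applies with $r_{\ell^n}$ replaced by $r_\ell\colon\Gl\surjects\Gfl$ and with Corollary~\ref{C:ref}(iii) in place of (i)--(ii); the determinant condition matches on both sides because $V_\ell\subseteq H$ makes $\det H$ the full preimage in $\Zl^*$ of $\det H(\ell)\subseteq \Fl^*$. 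I do not anticipate any real obstacle here: the whole argument is purely formal once Corollary~\ref{C:ref} is in hand.
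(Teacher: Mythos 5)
Your proposal is correct and follows exactly the route the paper intends: the paper offers no separate proof of Corollary~\ref{C:MaxClass}, having already set up the reduction/pullback correspondence in the discussion preceding Lemma~\ref{L:ref1} (``since $H$ is open, it must contain $V_{\ell^n}$ for some $n$, in which case $H$ corresponds to the maximal subgroup $H(\ell^n)$\dots''), and Corollary~\ref{C:ref} is precisely what pins down the value of $n$. Your verification that the two assignments $H\mapsto H(\ell^n)$ and $K\mapsto r_{\ell^n}^{-1}(K)$ are mutually inverse and maximality-preserving, and your observation that $V_\ell\subseteq H$ forces $\det H$ to be the full preimage of $\det H(\ell)$, fill in the routine details the paper leaves unstated.
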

The maximal subgroups structure of $\Gfl$ for $\ell$ prime is well-known (See \cite[\S 2.6]{Se72} or \cite[p.36]{Ma77}, for example.) According to the corollary, for $\ell\geq 5$ these account for all maximal closed subgroups of $\Gl$ with surjective determinant. For the primes 2 and 3, we get a few extra closed subgroups coming from $\GL_{2}(\ZZ/8\ZZ)$ and $\GL_{2}(\ZZ/9\ZZ)$, respectively. We conclude this section with a slightly closer look at the subgroup structure of $\GL_{2}(\ZZ/8\ZZ)$.
\begin{lemma} Let $H$ be a subgroup of $\GL_{2}(\ZZ/8\ZZ)$ such that $H\surjects\GL_{2}(\ZZ/4\ZZ)$. Then $[G:H]\leq 2$. 
\end{lemma}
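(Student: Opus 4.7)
The plan is as follows. Let $K := \ker(G \to \GL_2(\ZZ/4\ZZ))$, of order $16$; since $H$ surjects onto $\GL_2(\ZZ/4\ZZ)$ we have $HK = G$, whence $[G:H] = [K : H\cap K]$. It therefore suffices to show $[K : H\cap K] \leq 2$. As in the paragraph preceding Corollary~\ref{C:MaxClass} (with $\ell = 2$ and $n = 2$), the map $A \mapsto I + 4A$ is a group isomorphism $(\ML_2(\FF_2), +) \xrightarrow{\sim} K$, and the $G$-conjugation action on $K$ factors through the standard $\GL_2(\FF_2)$-conjugation action on $\ML_2(\FF_2)$. Because $H$ surjects onto $\GL_2(\FF_2)$, the subgroup $H\cap K$ is automatically a $\GL_2(\FF_2)$-submodule of $\ML_2(\FF_2)$.

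The core of the argument is to plant a specific non-scalar element inside $H \cap K$. Set $E_{12} := \left(\begin{smallmatrix}0&1\\0&0\end{smallmatrix}\right)$ and $q := I + E_{12} \in \GL_2(\ZZ/4\ZZ)$, an element of order $4$. By hypothesis $H$ contains some lift $h$ of $q$, and any such lift has the form $h = q + 4M$ with $M \in \ML_2(\FF_2) \subseteq \ML_2(\ZZ/8\ZZ)$. I plan to show that $h^4 \equiv I + 4 E_{12} \pmod 8$ for every $M$: squaring gives $h^2 \equiv q^2 + 4(qM+Mq) \equiv (I + 2 E_{12}) + 4(qM+Mq) \pmod 8$, and squaring once more, every $M$-dependent contribution vanishes modulo $8$ because $E_{12}^2 = 0$. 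Hence $h^4 \in H\cap K$, and it corresponds under the identification to the non-scalar element $E_{12} \in \ML_2(\FF_2)$.

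It then remains to check that the $\GL_2(\FF_2)$-submodule of $\ML_2(\FF_2)$ generated by $E_{12}$ is the trace-zero subspace $\mathfrak{sl}_2(\FF_2)$. Conjugating $E_{12}$ by $\left(\begin{smallmatrix}0&1\\1&0\end{smallmatrix}\right)$ yields $E_{21} := \left(\begin{smallmatrix}0&0\\1&0\end{smallmatrix}\right)$, and conjugating $E_{12}$ by $\left(\begin{smallmatrix}1&0\\1&1\end{smallmatrix}\right)$ yields $I + E_{12} + E_{21}$; summing these together with $E_{12}$ produces $I$, so the submodule contains the $\FF_2$-basis $\{I, E_{12}, E_{21}\}$ of $\mathfrak{sl}_2(\FF_2)$. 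Therefore $H \cap K \supseteq \mathfrak{sl}_2(\FF_2)$, and $[G:H] = [K : H \cap K] \leq [\ML_2(\FF_2) : \mathfrak{sl}_2(\FF_2)] = 2$. The main obstacle is the lift-independence of $h^4 \pmod 8$ in the second paragraph: this rigidity is exactly what pins down a non-scalar element in $H \cap K$ and so prevents $H$ from having index greater than $2$ in $G$.
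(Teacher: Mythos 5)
Your proof is correct and follows essentially the same strategy as the paper's: identify $K\cong\ML_2(\FF_2)$ as a $\GL_2(\FF_2)$-module via $A\mapsto I+4A$, observe that $H\cap K$ is a submodule because $H\surjects\GL_2(\FF_2)$, and then produce the non-scalar trace-zero element $I+4E_{12}$ in $H\cap K$ by taking a power of a lift, after which the submodule generated is the index-$2$ trace-zero subspace. The only cosmetic differences are that the paper lifts $I+2E_{12}\in\GL_2(\ZZ/4\ZZ)$ and squares, whereas you lift $I+E_{12}$ and take a fourth power (but your $h^2$ is exactly such a lift of $I+2E_{12}$, so the computations coincide), and that you verify the module-generation claim explicitly where the paper asserts it.
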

\begin{proof} Set $M:=\ML_2(\ZZ/8\ZZ)$. Since $H(I+4M)=\GL_2(\ZZ/8\ZZ)$, and since $\#(I+4M)=2^4$, we need only show that $\#(H\cap (I+4M))\geq 2^3$. For this it suffices to show that $H\cap (I+4M)\supseteq\{I+4A\colon \tr A \equiv 0\pmod 2\}$. As above, $I+4M$ is a $\GL_2(\FF_2)$-module, where the action is defined by conjugation. Since $H\surjects\GL_2(\FF_2)$, the subgroup $H\cap (I+4M)\subseteq I+4M$ is in fact a $\GL_2(\FF_2)$-submodule of $I+4M$. Furthermore $\{I+4A\colon \tr A \equiv 0\pmod 2\}$ is generated as a $\GL_2(\FF_2)$-module by $I+4\begin{pmatrix}0&1\\0&0\end{pmatrix}$. Thus we need only show that $I+4\begin{pmatrix}0&1\\0&0\end{pmatrix}\in H$. Since $H\surjects\GL_2(\ZZ/4\ZZ)$, it contains an element of the form $B=(I+2\begin{pmatrix}0&1\\0&0\end{pmatrix})(I+4A)$. Then $H$ also contains $B^2=I+4\begin{pmatrix}0&1\\0&0\end{pmatrix}$.
\end{proof}
\begin{corollary}\label{C:G8} Let $H\subseteq\GL_2(\ZZ_2)$ be a closed subgroup such that $H\surjects\GL_2(\ZZ/4\ZZ)$ and $(\sgn,\det)(H)=\{\pm 1\}\times \ZZ_2^*$. Then $H=\GL_2(\ZZ_2)$. 
\end{corollary}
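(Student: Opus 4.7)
The plan is to reduce the corollary directly to the preceding lemma together with Corollary~\ref{C:ref}(i) and Proposition~\ref{P:Gab}. Consider the reduction $H(8) := r_8(H) \subseteq \GL_2(\ZZ/8\ZZ)$. Since $H$ surjects onto $\GL_2(\ZZ/4\ZZ)$, so does $H(8)$; hence by the previous lemma the index $[\GL_2(\ZZ/8\ZZ) : H(8)]$ is at most $2$. The corollary then splits into two cases, one of which gives the conclusion immediately and the other of which I will rule out using the abelianization hypothesis.

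First, if $H(8) = \GL_2(\ZZ/8\ZZ)$, then by Corollary~\ref{C:ref}(i) we conclude $H = \GL_2(\ZZ_2)$, and we are done. So the substantive case is when $H(8)$ has index exactly $2$ in $\GL_2(\ZZ/8\ZZ)$. In that case $H(8)$ is the kernel of some nontrivial character $\chi\colon \GL_2(\ZZ/8\ZZ) \to \{\pm 1\}$. Pulling back through $r_8$, we get a nontrivial continuous character $\widetilde{\chi}\colon \GL_2(\ZZ_2) \to \{\pm 1\}$ whose kernel contains $H$.

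Now the key point: by Proposition~\ref{P:Gab} (restricted to the $\ell=2$ component), every continuous character of $\GL_2(\ZZ_2)$ to an abelian group factors through the abelianization $(\sgn,\det)\colon \GL_2(\ZZ_2) \to \{\pm 1\} \times \ZZ_2^*$. Hence $\widetilde{\chi} = \eta \circ (\sgn,\det)$ for some nontrivial character $\eta\colon \{\pm 1\} \times \ZZ_2^* \to \{\pm 1\}$. Since $H \subseteq \ker\widetilde{\chi}$, the image $(\sgn,\det)(H)$ lies in the proper subgroup $\ker\eta \subsetneq \{\pm 1\} \times \ZZ_2^*$, contradicting the hypothesis $(\sgn,\det)(H) = \{\pm 1\} \times \ZZ_2^*$.

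Thus the index-$2$ case is impossible, $H(8) = \GL_2(\ZZ/8\ZZ)$, and $H = \GL_2(\ZZ_2)$. There is no real obstacle here: the previous lemma has already done the combinatorial work of bounding the index, and the only thing that needs to be verified is that a hypothetical index-$2$ obstruction is detected at the level of $(\sgn,\det)$, which is exactly the content of Proposition~\ref{P:Gab}.
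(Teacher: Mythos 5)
Your proof is correct and follows the same strategy as the paper: use the preceding lemma to bound the index of $H(8)$ in $\GL_2(\ZZ/8\ZZ)$ by $2$, rule out the index-$2$ case using the abelianization hypothesis $(\sgn,\det)(H)=\{\pm 1\}\times\ZZ_2^*$, and conclude $H=\GL_2(\ZZ_2)$ via Corollary~\ref{C:ref}(i). The paper runs the middle step directly (index $\leq 2$ forces $H(8)\supseteq\ker(\sgn,\det)$, so $H(8)$ is determined by its full image in $\{\pm 1\}\times(\ZZ/8\ZZ)^*$), whereas you argue by contradiction through a putative index-$2$ character factoring through $(\sgn,\det)$; these are the same mechanism in different clothing.
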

\begin{proof} We need only prove that the mod 8 image $H(8)$ is all of $\GL_2(\ZZ/8\ZZ)$. By the lemma $H(8)$ is at most of index 2. Then $H(8)$ contains $\ker(\sgn,\det)$, the commutator of $\GL_2(\ZZ/8\ZZ)$, and corresponds via $(\sgn,\det)$ to a subgroup of $\{\pm 1\}\times (\ZZ/8\ZZ)^*$. But by hypothesis $(\sgn,\det)(H(8))=\{\pm 1\}\times (\ZZ/8\ZZ)^*$. Thus $H(8)=\GL_2(\ZZ/8\ZZ)$ and $H=\GL_2(\ZZ_2)$.
\end{proof}
\begin{remark} In fact, there are exactly seven index 2 subgroups of $\GL_2(\ZZ/8\ZZ)$, corresponding to the seven nontrivial characters of $\{\pm 1\}\times\ZZ/8\ZZ^*$. Let us denote the three nontrivial characters of $(\ZZ/8\ZZ)^*$ by $\chi_3$, $\chi_5$ and $\chi_7$; here $\chi_i$ is the unique character whose kernel is generated by $i$ in $(\ZZ/8\ZZ)^*$. Then the index 2 subgroups of $\GL_2(\ZZ_2)$ are the kernels of the characters  $\sgn$, $\chi_i\circ\det$ and $\sgn\cdot(\chi_i\circ\det)$, where $i\in\{3,5,7\}$. 

Suppose $H$ is one of these index 2 subgroups. Then the image of $H$ in $\GL_2(\ZZ/4\ZZ)$ is either all of $\GL_2(\ZZ/4\ZZ)$ or of index 2. Furthermore, the image is of index 2 if and only if $(I+4M)\subseteq H$. The only subgroups above for which this is true are $\ker(\sgn)$, $\ker(\chi_5\circ\det)$ and $\ker(\sgn\cdot(\chi_5\circ\det))$. Their corresponding images mod 4 are the three subgroups of $\GL_2(\ZZ/4\ZZ)$ of index 2: namely, $\ker(\sgn)$, $\ker(\det)=\SL_2(\ZZ/4\ZZ)$ and $\ker(\sgn\cdot\det)$. 

 \end{remark}

%*****************************************************
\section{Some arithmetic}
\subsection{The adelic representation} We return to the situation of an elliptic curve $E/K$ with $K$ a number field and consider its $\ell$-adic representations $\rho_{E,\linfty}\colon G_{K}\rightarrow\GZ$, and adelic representation $\rho_{E}\colon G_{K}\rightarrow\GZ$. Deriving necessary and sufficient conditions for $\rho_{E}$ to be surjective is now simply an exercise of translating the statements of Theorem~\ref{T:H=G} into statements about our Galois representations. 

\begin{theorem}\label{T:rhoSurj}
Let $E/K$ be an elliptic curve defined over a 
number 
field 
$K$. Let $\Delta\in K^\times$ be the discriminant of any Weierstrass model of $E/K$. 
Then $\rho_{E}$ is surjective if and only if
\begin{itemize}
\item[(i)] the $\ell$-adic representation $\rho_{\linfty}\colon G_K\rightarrow\Gl$ is surjective for all $\ell$,
\item[(ii)]$K\cap\QQ^{\cyc}=\QQ$ and
\item[(iii)] $\sqrt{\Delta}\notin K^{\cyc}$.
\end{itemize}
\end{theorem}
\begin{proof}
Set $H=\rho_{E}(G_{K})$. According to Theorem~\ref{T:H=G}, we have $H=\GZ$ if and only if $\pi_{\ell}(H)=\GZ$ for all $\ell$ and $(\sgn,\det)(H)=\Gab$. 

Since $\rho_{E,\linfty}=\pi_{\ell}\circ\rho_{E}$, the first statement is clearly equivalent to condition (i) above. It remains to show that the surjectivity of $(\sgn,\det)|_{H}$ is equivalent to conditions (ii) and (iii). To do so, we must understand how $\sgn$ and $\det$ arise from the arithmetic of our elliptic curve. 

The $\det$ map is easy to identify. From properties of the Weil pairing, it follows that it is essentially the cyclotomic character; i.e., we have a commutative diagram
\[\xymatrix{G_K\ar[r]^{\rho_{E}} \ar[dr]^{\text{res}}&\GZ\ar[d]^{\det}\\ 
&\GalKcyc\simeq\Zhat^*.
}\]

The $\sgn$ map, on the other hand, was defined as the composition
\[\GZ\xrightarrow{r_{2}}\GL_{2}(\FF_{2})\simeq\mathfrak{S_{3}}\xrightarrow{\sgn}\{\pm 1\} 
\]
Since $r_{2}\circ\rho_{E}=\rho_{E,2}$, if we start with a $\sigma\in G_{K}$, we see that $\sgn(\rho(\sigma))$ is $\pm 1$ depending on whether $\sigma$ is an even or odd permutation of the three nontrivial points of $E[2](\Kbar)$. If we choose a Weierstrass model for $E/K$ and write $e_{i}$ for the $x$-coordinates of the three nontrivial 2-torsion points, we have $\sqrt{\Delta}=\pm 4\prod_{i>j}(e_{i}-e_{j})$ (see \cite[\S 5.3]{Se72}).  Thus $\sigma$ is even if and only if $\sigma(\sqrt{\Delta})=\sqrt{\Delta}$. In other words, $\sgn\circ\rho_{E}=\chi_{\Delta}$, where $\chi_{\Delta}\colon G_{K}\rightarrow\{\pm 1\}$ is the (possibly trivial) character defined by $K(\sqrt{\Delta})$. 

Now consider the tower of fields
\[\xymatrix{&&\Kbar&\\
&K^{\cyc}\ar@{-}[ur]&&K(\sqrt{\Delta}) \ar@{-}[ul]\\
\QQ^{\cyc}\ar@{-}[ur]&&K\ar@{-}[ul]_{N_{2}}\ar@{-}[ur]^{N_{1}}\ar@{-}[uu]^{G_K}\\
&K\cap\QQ^{\cyc}\ar@{-}[ul]_{N_{2}} \ar@{-}[ur]\\
&\QQ\ar@{-}[uul]^{\Zhat^*}\ar@{-}[u] }
\]Here various Galois extensions have been labeled with their corresponding Galois 
group. 
Namely, we have (taking some liberties with identifications) 
$\Gal(\QQ^{\cyc}/\QQ)=\Zhat^*$, $\Gal(K(\sqrt{\Delta})/K)=N_{1}\subseteq\{\pm 1\}$ and 
$\Gal(\QQ^{\cyc}/K\cap\QQ^{\cyc})=\Gal(K^{\cyc}/K)= 
N_{2}\subseteq\Zhat^*$.

We have just seen that the map  
$(\sgn, 
\det)\circ\rho_{E}\colon G_{K}\rightarrow\Gab$ is just the product of the restriction maps
\[\xymatrix{G_K\ar[r]^-{\text{res}\times\text{res}}&N_{1}\times N_{2} \\
\sigma\ar@{|->}[r]& (\sigma\vert_{K(\sqrt{\Delta})},\sigma\vert_{K^{\cyc}}),}\]
and in general we have $(\sgn,\det)(H)\subseteq N_{1}\times N_{2}\subseteq\Gab$. Thus $(\sgn,\det)(H)=\Gab$ if and only if both set inequalities in this chain are in fact equalities. By Galois theory, the first inequality is an equality if and only if $\sqrt{\Delta}\notin K^{\cyc}$, and the second inequality is an equality  if and only if $\sqrt{\Delta}\notin K$ and $K\cap\QQ^{\cyc}=\QQ$. Take together, we conclude that $(\sgn,\det)(H)=\Gab$ if and only if $\sqrt{\Delta}\notin K^{\cyc}$ and $K\cap\QQ^{\cyc}=\QQ$.
\end{proof}
\begin{remark}\label{remark}Conditions (ii) and (iii) are equivalent to the single statement:
\begin{itemize}
\item[(ii)'] $K(\sqrt{\Delta})\cap\QQ^{\cyc}=\QQ$. 
\end{itemize}
Though this has the advantage of brevity, we prefer the stated form of the theorem as it more clearly points the way to finding elliptic curves with surjective adelic representations.  
\end{remark}
\begin{remark}The theorem and its proof elucidate  what happens when $K=\QQ$. Since $\QQ^{\cyc}=\QQ^{\ab}$, we have $\QQ(\sqrt{\Delta})\subseteq\QQ^{\cyc}$. Tracing through the various maps, we see that for any $\sigma\in G_{\QQ}$,  
\begin{align*}
\sgn(\rho_{E}(\sigma))&=\sigma\vert_{\QQ(\sqrt{\Delta})}\\&=(\sigma\vert_{\QQ^{\cyc}})\vert_{\QQ(\sqrt{\Delta})}\\&=\chi_{\Delta}(\det(\rho_{E}(\sigma))),
\end{align*}
where as before $\chi_{\Delta}\colon\Zhat^{*}\rightarrow\{\pm 1\}$ is the (possibly trivial) character arising from the extension $\QQ(\sqrt{\Delta})/\QQ$. Then $\rho_{E}(G_{\QQ})$ is contained in the Serre subgroup $H_{\chi_{\Delta}}=\{g\in\GZ\colon \sgn g=\chi_{\Delta}(\det g)\}$. Thus $[\GZ:\rho_{E}(G_{\QQ})]\geq[\GZ:H_{\chi_{\Delta}}]=2$. In particular, $\rho_{E/\QQ}(G_{\QQ})\ne\GZ$. 
\end{remark}

%******************************************************************
\subsection{Semistable elliptic curves}\label{Ksemistable}
Guided now by Theorem~\ref{T:rhoSurj}, we would like to find elliptic curves $E/K$ for which $\rho_{E, \linfty}$ is surjective for all $\ell$.  Recall that when $E/K$ is non-CM, the adelic image is open, which implies that $\rho_{E,\linfty}(G_K)=\Gl$ for all but finitely many primes.  Accordingly, we will call the primes $\ell$ for which $\rho_{E,\linfty}$ is not surjective the \emph{exceptional primes of $E/K$}.  Ideally we would like to be able to determine the set of exceptional primes for any given non-CM elliptic curve. For $\ell\geq 
5$, Corollary~\ref{C:ref} and the surjectivity of 
$\det:\rho_{E,\linfty}(G_K)\rightarrow\Zl^*$ 
imply that  $\rho_{E,\linfty}$ is surjective if and only if $\rho_{E,\ell}$ is 
surjective. For $\ell=2,3$  we have to do a little more work.

In either case, an important first step is to determine the mod $\ell$ image $\rho_{E,\ell}(G_K)$ for all $\ell$. It turns out that we can learn a lot about $\rho_{E,\ell}(G_K)$ simply by studying the image of inertia $\rho_{E,\ell}(I_w)$ for various inertia subgroups $I_w\subseteq\GalK$. (See Section~\ref{SS:Notations} for notations and definitions related to inertia groups.) Serre studies inertia representations extensively in \cite{Se72}. When the non-CM elliptic curve $E$ is semistable the results are particularly nice, yielding techniques for computing the exceptional primes of $E$.  Modulo some group theory, everything follows from the picture of the inertia representations given by the lemma below, which is essentially a synthesis of various facts scattered throughout \cite{Se72}: more specifically, the corollary to Proposition~13 in \S 1.12, and some properties of semistable curves discussed in \S 5.4.  

\begin{lemma}\label{L:Tate}Let $K$ be a number field, $\ell$ a rational prime unramified in $K$, and $E/K$ a  semistable elliptic curve with $j$-invariant $j_E$. Fix $v\in\Sigma_K$ and $w\in\Sigma_{\Kbar}$ with $w\mid v$. Recall that $S_E$ is the set of bad places of $E/K$, and that $S_\ell$ is the set of places $v\in\Sigma_K$ such that $v\mid l$. 
\begin{itemize}
\item[(i)] If $v\in\Sigma_K-S_E-S_\ell$, then $\rho_{E,\ell}(I_w)$ is trivial.
\item[(ii)]If $v\in S_E-S_\ell$, then $\rho_{E,\ell}(I_w)$ is either trivial or cyclic of 
order $\ell$.
\item[(iii)]If $v\in S_E$ and $\ell\nmid v(j_E)$, then $\rho_{E,\ell}(I_w)$ contains an element of order 
$\ell$.
\item[(iv)]If $v\mid l$, then  
\[\rho_{E,\ell}(I_w)=\left\{\begin{pmatrix}s&0\\0&1\end{pmatrix}: 
s\in\Fl^*\right\}, or \]
    \[\rho_{E,\ell}(I_w)=\left\{\begin{pmatrix}s&t\\0&1\end{pmatrix}: s\in\Fl^*, 
t\in\Fl\right\}, \] when $E$ has (good) ordinary reduction or bad (multiplicative) reduction at $v$; and 
$\rho_{E,\ell}(I_w)$ is a nonsplit Cartan subgroup, when $E$ has (good) supersingular reduction at $v$. 
  \end{itemize}
\end{lemma}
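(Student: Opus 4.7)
The plan is to verify the four parts of the lemma separately, treating them as a repackaging of standard facts about inertia actions on $\ell$-adic Galois representations of elliptic curves that appear, in scattered form, in \cite{Se72}. The work is largely bookkeeping: identify the reduction type at $v$, invoke the appropriate result, and read off the form of $\rho_{E,\ell}(I_w)$.

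For (i), the N\'eron--Ogg--Shafarevich criterion says that if $v \notin S_E \cup S_\ell$, then $\rho_{E,\ell}$ is unramified at $v$, so $\rho_{E,\ell}(I_w)=\{1\}$. For (ii) and (iii), semistability and $v \in S_E$ force multiplicative reduction; the Tate uniformization realizes $E[\ell]$ with a basis of the form $\zeta_\ell,\, q^{1/\ell}$, where $v(q) = -v(j_E)$. As $v \nmid \ell$, the extension $K_v(\mu_\ell)/K_v$ is unramified, so $I_w$ fixes $\zeta_\ell$; a topological generator $\tau$ of tame inertia sends $q^{1/\ell} \mapsto \zeta_\ell^{a(\tau)}\, q^{1/\ell}$, producing a unipotent image of order dividing $\ell$. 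This image is trivial exactly when $\ell \mid v(q)$, i.e., $\ell \mid v(j_E)$, proving both (ii) and (iii).

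For (iv), I split by reduction type at $v \mid \ell$. In the good ordinary or (bad) multiplicative subcase, the connected--\'etale filtration on the $\ell$-divisible group of the N\'eron model (or equivalently the Tate uniformization in the multiplicative case) yields a short exact sequence of $I_w$-modules $0 \to \FF_\ell(\chi_\ell) \to E[\ell] \to \FF_\ell \to 0$, where $\chi_\ell$ is the mod $\ell$ cyclotomic character. Since $\ell$ is unramified in $K$, the extension $K_v(\mu_\ell)/K_v$ is totally tamely ramified of degree $\ell - 1$, so $\chi_\ell|_{I_w}$ surjects onto $\FF_\ell^\times$. In a basis compatible with the filtration, $\rho_{E,\ell}(I_w)$ is therefore upper triangular with diagonal entries $\chi_\ell$ and $1$: either the split diagonal subgroup (when the filtration splits on inertia) or the full standard Borel (when it does not). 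In the supersingular subcase I invoke the analysis in \S 1.11 of \cite{Se72}, which identifies the $I_w$-action on $E[\ell]$ via the two fundamental characters of level $2$; their combined image is a non-split Cartan subgroup of $\GL_2(\FF_\ell)$.

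The main obstacle is case (iv): ruling out intermediate possibilities in the ordinary/multiplicative subcase (showing that only the split diagonal and the full Borel actually occur, which comes down to controlling wild inertia) and correctly deploying the theory of fundamental characters of level $2$ in the supersingular subcase. The remaining parts reduce cleanly to N\'eron--Ogg--Shafarevich and the Tate uniformization, and require little beyond the hypothesis that $\ell$ is unramified in $K$.
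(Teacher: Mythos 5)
The paper does not actually prove this lemma; it explicitly presents it as ``a synthesis of various facts scattered throughout \cite{Se72},'' citing the corollary to Proposition~13 in \S 1.12 and \S 5.4 of that paper. Your proposal reconstructs the underlying arguments from scratch, and the reconstruction is essentially what those references give: (i) is the N\'eron--Ogg--Shafarevich criterion; (ii) and (iii) are the Tate uniformization at a multiplicative place away from $\ell$, with the inertia action on $q^{1/\ell}$ trivial precisely when $\ell\mid v(q)=-v(j_E)$; and (iv) is the connected--\'etale filtration in the ordinary/multiplicative case together with Serre's level-$2$ fundamental characters in the supersingular case. The outline is correct.

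One small refinement to your diagnosis of the obstacle in (iv): ruling out intermediate possibilities in the ordinary/multiplicative subcase is actually pure group theory, not really a wild-inertia issue. Once you know $H:=\rho_{E,\ell}(I_w)$ lies in the Borel with diagonal characters $(\chi_\ell,1)$ and that $\chi_\ell|_{I_w}$ is onto $\FF_\ell^*$ (here is where unramifiedness of $\ell$ in $K$ enters, giving $e=1$), the intersection $H\cap U$ with the unipotent radical $U\simeq\FF_\ell$ is an $\FF_\ell^*$-stable subgroup of $U$, hence $\{0\}$ or $U$; in the first case Schur--Zassenhaus makes $H$ conjugate to the diagonal torus, and in the second $H$ is the full Borel. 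Wild inertia genuinely matters only in the supersingular subcase, where one needs the representation on $E[\ell]$ to factor through tame inertia (as Serre shows when $e<\ell-1$) so that the image is governed by the fundamental character of level $2$; note that for $\ell=2$ one has $e=\ell-1$ and Serre's clean inequality fails, so that subcase deserves its own (easy) check, e.g.\ via the identification $\GL_2(\FF_2)\simeq\mathfrak S_3$, if one wants the lemma in full generality. These points are consistent with the paper's decision to cite rather than reprove.
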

Amazingly enough this simple description of the inertia representations imposes strict restrictions on nonsurjective mod $\ell$ representations arising from a semistable $E/K$. The propositions and corollaries that follow are for the most part straightforward generalizations of Serre's results in \cite[\S 5.4]{Se72}. We formulate them for a number field $K$ satisfying the following properties:
\begin{itemize}
\item[(i)] There is a real embedding $K\injects \RR$. This gives rise to a complex conjugation map $\sigma\in G_K$ satisfying $\sigma^2=1$ and $\det(\rho_{E,\ell}(\sigma))=-1$ for all $\ell\geq 3$. It follows that $\rho_{E,\ell}(\sigma)$ is diagonalizable in $\Gfl$ for all $\ell\geq 3$, with eigenvalues 1 and -1. 
\item[(ii)] The narrow class group $\mathcal{C}_K^\infty$ is trivial. Recall $\mathcal{C}_K^\infty$ is the group of fractional ideals of $K$ modulo the subgroup of totally real principal fractional ideals. This assumption has as a consequence that any abelian extension of $K$ unramified at all finite primes is trivial. 
\item[(iii)]  We have $K\intersect\QQ^\cyc=\QQ$. This property ensures that $\det\colon \rho_E(G_K)\rightarrow\Zhat^*$ is surjective. 

\end{itemize}

\begin{proposition}\label{semistableborel} Let $K$ be a number field with a real embedding, a trivial narrow class group, and satisfying $K\cap\QQ^\cyc=\QQ$. Let $E/K$ be a semistable elliptic curve with $j$-invariant $j_E$. Suppose $\ell$ is a 
prime unramified in $K$. If  $\ell=2,3,5$, suppose further that $\ell\nmid v(j_E)$ for some $v\in S_E$. If
$\rho_{E,\ell}(G_K)\ne\Gfl$, then $\rho_{E,\ell}(G_K)$ is contained in a Borel subgroup of $\Gfl$.
\end{proposition}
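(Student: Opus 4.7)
Suppose $G := \rho_{E,\ell}(G_K)$ is a proper subgroup of $\Gfl$; then $G$ lies in a maximal subgroup $M \subsetneq \Gfl$. By Dickson's classification of maximal subgroups of $\Gfl$ (cf.~\cite[\S 2.6]{Se72}), up to conjugacy $M$ is either a Borel, the normalizer $N_s$ of a split Cartan, the normalizer $N_{ns}$ of a non-split Cartan, or an \emph{exceptional} subgroup whose image in $\PGL_2(\Fl)$ is isomorphic to $A_4,S_4$, or $A_5$. The plan is to rule out the last three using Lemma~\ref{L:Tate} together with the three arithmetic hypotheses on $K$ imposed in Section~\ref{Ksemistable}.

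The first step is to locate an element of order $\ell$ inside $G$. For $\ell\in\{2,3,5\}$ this is precisely the role of the extra hypothesis: $\ell\nmid v(j_E)$ for some $v\in S_E$, combined with Lemma~\ref{L:Tate}(iii), places an element of order $\ell$ in $\rho_{E,\ell}(I_w)\subseteq G$. For $\ell\geq 7$ one uses Lemma~\ref{L:Tate}(iv) at a place $v\mid\ell$: bad multiplicative reduction at $v$ yields a transvection, while in the residual case of good reduction at every $v\mid\ell$ the inertia image is an entire Cartan (split or non-split), which serves as a substitute in the remaining steps.

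With this in hand the Cartan-normalizer cases are excluded on order-theoretic grounds: $|N_s|=2(\ell-1)^2$ and $|N_{ns}|=2(\ell^2-1)$ are coprime to $\ell$, and a Cartan of one type is not contained in the normalizer of a Cartan of the other. For the exceptional case, $M$ has image of order $\leq 60$ in $\PGL_2(\Fl)$; when $\ell\geq 7$ the element of order $\ell$ already rules this out, but for $\ell\in\{2,3,5\}$ one must bring in arithmetic. Here the abelianization $M/M'$ is a small abelian group, so the composite $G_K\to G\to M/M'$ cuts out a small abelian extension $L/K$ whose ramification at finite places is controlled by $S_E\cup S_\ell$. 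Triviality of the narrow class group $\mathcal{C}_K^\infty$ then bounds $L$ severely, the real embedding of $K$ constrains its behavior at the archimedean places, and $K\cap\QQ^\cyc=\QQ$ (which also secures surjectivity of $\det$) eliminates the remaining possibilities, forcing $L=K$ and contradicting $G\subsetneq\Gfl$.

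The main obstacle is this last step for $\ell\in\{2,3,5\}$, where the purely group-theoretic input is insufficient and one must carry out a careful ramification analysis of the small abelian extension of $K$ cut out by $G\to M/M'$; it is precisely here that all three arithmetic conditions on $K$ do essential work, beyond merely guaranteeing surjectivity of the determinant.
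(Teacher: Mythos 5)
Your high-level plan (Dickson's classification of maximal subgroups) is reasonable, but the proposal has genuine gaps and uses the three arithmetic hypotheses in the wrong places.

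First, for $\ell\in\{2,3,5\}$ (and in fact for any $\ell$ with $\ell\nmid v(j_E)$ for some $v\in S_E$) the paper's argument is a one-liner that you miss: once $G=\rho_{E,\ell}(G_K)$ contains an element of order $\ell$, a standard fact about $\GL_2(\FF_\ell)$ says $G$ either contains $\SL_2(\FF_\ell)$ or is contained in a Borel. The first alternative is impossible because $\det$ is surjective (this is where $K\cap\QQ^\cyc=\QQ$ enters) while $G\ne\GL_2(\FF_\ell)$. That settles all of $\ell=2,3,5$ immediately, with no need to analyze Cartan normalizers or exceptional subgroups at all. Your proposed ramification analysis of the abelianization $M/M'$ of a maximal subgroup $M$ is both unnecessary and, as sketched, not a proof: you do not explain how the ramification of the corresponding abelian extension $L/K$ is controlled, nor how the narrow class group and the real place actually force $L=K$, and the reliance on $M/M'$ being ``small'' is not quantified.

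Second, for $\ell\geq 7$ your exclusion of the Cartan-normalizer case by ``order-theoretic grounds'' is incomplete. Coprimality to $\ell$ only helps when $G$ has an element of order $\ell$, and type-mismatch only rules out $G$ sitting in a normalizer of the \emph{opposite} type of Cartan. What remains is the genuine possibility that $G$ is contained in the normalizer $N(C)$ of a Cartan $C$ of the \emph{same} type as the one forced by inertia, without being contained in $C$. This is exactly where the paper uses the trivial narrow class group: $G\subseteq N(C)$, $G\not\subseteq C$ would give a nontrivial quadratic character of $G_K$ that is unramified at every finite place (Lemma~\ref{L:Tate} controls ramification both away from $\ell$ and above $\ell$), contradicting $\mathcal{C}_K^\infty=1$. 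You also omit the step where $G$ contained in a Cartan is forced to be a \emph{split} Cartan: this uses the real embedding, since complex conjugation has eigenvalues $\pm 1$ in $\GL_2(\FF_\ell)$ and hence cannot lie in a nonsplit Cartan. In short, the narrow class group and the real place are needed precisely in the Cartan and Cartan-normalizer cases for $\ell\geq 7$, not in the exceptional case for small $\ell$ where you invoke them.
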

\begin{proof}
The proposition is nearly identical to Proposition 21 in \cite{Se72}. As such we are content to sketch a proof, mainly just to illustrate Lemma~\ref{L:Tate} at work. 

If  $v\in S_E$ and $\ell\nmid v(j_E)$, then according to Lemma~\ref{L:Tate}, the mod $\ell$ image contains an element of order $\ell$. From group theory it follows that the mod $\ell$ image either contains $\SL_2(\FF_\ell)$ or is contained in a Borel. The former is impossible as the determinant map is surjective (since $K\cap\QQ^\cyc=\QQ$), and we assume the mod $\ell$ representation is not surjective.  

Now assume $\ell$ is unramified in $K$ and $\ell\geq 7$. Lemma~\ref{L:Tate} implies the mod $\ell$ image contains a split semi-Cartan subgroup or a nonsplit Cartan subgroup. Again it follows from group theory that the mod $\ell$ image is contained in either a Borel subgroup, a Cartan subgroup, or else it is contained in the normalizer of a Cartan subgroup, but not the Cartan subgroup itself. The last case would give rise to a (nontrivial) unramified character $\chi\colon G_L\rightarrow\{\pm 1\}$, contradicting the fact that $K$ has trivial narrow class group. If the mod $\ell$ image is contained in a Cartan subgroup, it must be a split Cartan subgroup, thanks to the complex conjugation $\sigma\in G_K$, which is diagonalizable mod $\ell$. Since split Cartan subgroups are contained in a Borel subgroup, we are done.
\end{proof}

As we mentioned in the Introduction, Theorem~\ref{T:rhoSurj} leads the hunter of elliptic curves with surjective adelic representations naturally to non-Galois cubic extensions of $\QQ$. With this in mind we include the following corollaries, which specialize to number fields $K$ with $[K:\QQ]=3$. Note that in this case the existence of a real embedding is automatic.    
\begin{corollary}\label{C1:semistable}Let $E, K$ and $\ell$ be as in 
Proposition~\ref{semistableborel} 
and 
suppose that $\rho_{E,\ell}(G_K)\ne\Gfl$. Assume further that $[K:\QQ]=3$ and and that $(U_K^+-1)\cap U_K\ne\emptyset$. There is a basis of $E[l](\Kbar)$ in terms of 
which 
$\rho_{E,\ell}$ is of the form $\begin{pmatrix}\chi_1&*\\0&\chi_2\end{pmatrix}$  for 
characters 
$\chi_i:G_K\rightarrow \Fl^*$. Furthermore one of the characters is trivial and  the other is $\det\circ\rho_{E,\ell}$.
\end{corollary}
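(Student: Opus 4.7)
The starting point is Proposition~\ref{semistableborel}, which places $\rho_{E,\ell}(G_K)$ inside a Borel subgroup of $\Gfl$. In particular, there is a basis of $E[\ell](\Kbar)$ putting $\rho_{E,\ell}$ into the form $\begin{pmatrix}\chi_1 & * \\ 0 & \chi_2\end{pmatrix}$ with $\chi_1\chi_2=\det\circ\rho_{E,\ell}$. Since $K\cap\QQ^\cyc=\QQ$, this determinant is the full mod-$\ell$ cyclotomic character $\bar\chi_\cyc$, so the corollary reduces to showing that one of $\chi_1,\chi_2$ is trivial (the other will then automatically be $\bar\chi_\cyc$).

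To pin down the ramification of $\chi_1,\chi_2$, I would invoke Lemma~\ref{L:Tate}. Semistability together with parts (i)--(ii) of that lemma force both characters to be unramified at every finite $v\notin S_\ell$. At $v\in S_\ell$ the supersingular case of (iv) is ruled out by the Borel containment (a nonsplit Cartan has no $\Fl$-rational eigenvector), so part (iv) applies in its good-ordinary or multiplicative form and identifies the diagonal of $\rho_{E,\ell}|_{I_w}$, in some order, as $1$ and $\bar\chi_\cyc|_{I_w}$. Consequently, for every $v\in S_\ell$ there is an exponent $a_v\in\{0,1\}$ with $\chi_1|_{I_w}=\bar\chi_\cyc^{\,a_v}|_{I_w}$ and $\chi_2|_{I_w}=\bar\chi_\cyc^{\,1-a_v}|_{I_w}$. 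When all the $a_v$ agree, one of $\chi_1,\chi_2$ is unramified at every finite place and hence trivial by the trivial narrow class group hypothesis on $K$, completing the argument.

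The remaining task, which is the heart of the matter, is to exclude the mixed case. This can only arise when $\ell$ splits into more than one prime of $K$; since $[K:\QQ]=3$ there are then either two or three primes in $S_\ell$ to contend with. For this I would pass to class field theory, viewing $\chi_1$ as a character of a narrow ray class group with finite conductor supported on $S_\ell$; its local components are then $N_{K_v/\QQ_\ell}^{\,a_v}\bmod\ell$, and triviality on units translates into the congruence
\[
\prod_{v\in S_\ell} N_{K_v/\QQ_\ell}(u)^{a_v}\equiv 1\pmod{\ell}
\qquad\text{for every }u\in U_K^+.
\]
The hypothesis $(U_K^+-1)\cap U_K\ne\emptyset$ furnishes a distinguished totally positive unit $u_0$ for which $u_0-1$ is also a unit, so $u_0\not\equiv 1\pmod{\p}$ for every $\p\in S_\ell$. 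The main obstacle is extracting from this single special unit enough numerical information to rule out every nonconstant pattern of $a_v$'s, particularly in the partially-split case where one residue field is $\FF_{\ell^2}$ rather than $\Fl$ and the local norm is no longer a simple reduction; combining the congruence above with $N_{K/\QQ}(u_0)=\pm 1$, with the fact that $u_0-1$ is invertible mod each $\p$, and with the very limited cubic splitting patterns available for $\ell$, should force the $a_v$ to coincide and thus close the case.
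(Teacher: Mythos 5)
Your framework is the paper's: Proposition~\ref{semistableborel} gives the Borel form, Lemma~\ref{L:Tate} shows $\chi_1,\chi_2$ are unramified outside $S_\ell$, and the trivial narrow class group together with the special unit is meant to finish. But you leave the crux as a heuristic, writing that the hypotheses ``should force the $a_v$ to coincide,'' and you explicitly flag the $\FF_{\ell^2}$-norm in the partially-split case as an unresolved obstacle. That final step is exactly where the paper's work lies, so as written this is a genuine gap.

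Two reductions you are missing close it. First, since at each $v\in S_\ell$ exactly one of $\chi_1,\chi_2$ is ramified and $|S_\ell|\le 3$, the pigeonhole principle gives a character ramified at \emph{at most one} place of $S_\ell$, so you never have to rule out patterns like $(1,1,0)$; it suffices to show no character is ramified at \emph{exactly} one place. Second, when $(\ell)=\mathfrak{p}\mathfrak{q}$ with $f(\mathfrak{p})=2$, if the two characters are ramified at $\mathfrak{p}$ and $\mathfrak{q}$ respectively, one simply works with the one ramified at the degree-$1$ prime $\mathfrak{q}$: its residue field is $\Fl$, the local norm is just reduction, and the $\FF_{\ell^2}$ worry never arises. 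Granting these, suppose $\chi$ is ramified only at a degree-$1$ prime $\mathfrak{p}\mid\ell$. By Lemma~\ref{L:Tate}(iv) its inertia image there is all of $\Fl^*$, so $\chi$ is surjective and factors through the narrow ray class group $\mathcal{C}_K^{\mathfrak{m}}$ with $\mathfrak{m}=\infty\cdot\mathfrak{p}^n$. Using $\mathcal{C}_K^\infty=1$ and $\ell\nmid\#\Fl^*$, the resulting surjection $(\OK/\mathfrak{p}^n)^*\surjects\Fl^*$ factors through an isomorphism $(\OK/\mathfrak{p})^*\xrightarrow{\sim}\Fl^*$ whose kernel contains the image of $U_K^+$. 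Now take $u\in(U_K^+-1)\cap U_K$: the totally positive unit $u+1$ must reduce to $1$ in $(\OK/\mathfrak{p})^*$, forcing $u\equiv 0\pmod{\mathfrak{p}}$ and contradicting $u\in U_K$. This is the concrete computation your sketch leaves open.
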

\begin{remark} Recall that  $U_K$ (resp. $U_K^+$) is the group of units (resp. totally positive units) of $K$.
\end{remark}
\begin{proof}Since $\rho_{E,\ell}(G_K)\ne\Gfl$, Proposition~\ref{semistableborel} 
implies 
$\rho_{E,\ell}(G_K)$ is contained in a Borel subgroup. The first statement now follows 
easily.

Assume we have picked a basis so that $\rho_{E,\ell}$ is of the form $\begin{pmatrix}\chi_1&*\\0&\chi_2\end{pmatrix}$. Since 
$\chi_1\cdot\chi_2=\det\circ\rho_{E,\ell}$, we need only show that one of the characters is trivial. A character $\chi\colon G_{K}\rightarrow\Fl^{*}$ is trivial if and only if it is unramified for all $v\in\Sigma_{K}$: a consequence of $K$ having trivial narrow class group. Thus we need only show that one of the two characters is unramified everywhere. 

First observe that both characters are unramified for all $v\nmid l$. Indeed, if $v\notin S_E$ and $v\nmid l$, then $\rho_{E,\ell}$ is itself unramified.  Likewise, if $v\in S_E$ and $v\nmid l$, then by 
Lemma~\ref{L:Tate} for any $w\mid v$ the image of $I_w$ in $\Gfl$ is either 
trivial or cyclic of order $\ell$. In either case, we see that
\[ \rho_{E,\ell}(I_w)\subseteq\left\{\begin{pmatrix}1&t\\0&1\end{pmatrix}: 
t\in\FF_\ell\right\}, \]
whence both $\chi_i$ are unramified.  So it only remains to show that there is one character that is also unramified at each place $v\mid l$. The argument now divides into cases depending on the splitting behavior of $\ell$.

\noindent\emph{Case 1: $\ell$ is inert.} Take the unique $v\mid l$ and an inertia group $I_w$ for some $w\mid v$. The image of  inertia $\rho_{E,\ell}(I_w)$ cannot be a nonsplit Cartan subgroup as it is contained in a Borel subgroup.  But then by Lemma~\ref{L:Tate}, $\rho_{E,\ell}(I_w)$ must be of the form 
$\begin{pmatrix}*&0\\0&1\end{pmatrix}$ or 
$\begin{pmatrix}*&*\\0&1\end{pmatrix}$.  Then 
one of the $\chi_i$, call it $\chi_{i_0}$, is trivial when restricted to $I_w$. This shows that $\chi_{i_0}$ is unramified at $v$, and hence everywhere, as desired.

\noindent\emph{Case 2: $\ell$ is totally split.} Suppose $(\ell)=\mathfrak{p}\mathfrak{q}\mathfrak{r}$. As in the 
inert case, at each $v\mid l$, exactly one of the characters is unramified. Since there are three places above $\ell$, by the pigeonhole principle one of the characters, call it $\chi_{i_0}$, is unramified 
at at least two of the places.  

Suppose $\chi_{i_0}$ is ramified at exactly 
one place. Assume this place is $v=\mathfrak{p}$.  In terms of Galois theory, $\chi_{i_0}$ 
corresponds to an abelian extension $L/K$ with $\Gal(L/K)\simeq \Fl^*$ such that only 
$\mathfrak{p}$ and possibly $\infty$ ramify in $L$. According to class field theory, there is a 
modulus of the form $\mathfrak{m}=\infty\cdot\mathfrak{p}^n$ such that $L$ is contained in the 
ray class field $K_{\mathfrak{m}}$. We then have a surjection 
$\mathcal{C}_K^{\mathfrak{m}}\simeq\Gal(K_{\mathfrak{m}}/K)\surjects 
\Gal(L/K)\simeq\Fl^*$, 
where $\mathcal{C}_K^{\mathfrak{m}}$ is the group of fractional deals of $K$ relatively prime to 
$\mathfrak{p}$ modulo the group of principal ideals of the form $(a)$ where 
$a\equiv 1 
\pmod{\mathfrak{p}^n}$ and $a$ is totally positive. Furthermore there is an exact sequence (\cite[\S VI.1]{Neu})
\[1\rightarrow U_K^+/U_{\mathfrak{m},1}\rightarrow 
(\OK/\mathfrak{p}^n)^*\rightarrow 
\mathcal{C}_K^{\mathfrak{m}}\rightarrow\mathcal{C}_K^{\infty}\rightarrow 1 \]
where
$U_{\mathfrak{m},1}$ is the 
subgroup of totally positive units which are congruent to 1 modulo 
$\mathfrak{p}^n$.  Since  $\mathcal{C}_K^{\infty}=1$ in our case, we get a composition of surjections
\[ (\OK/\mathfrak{p}^n)^*\surjects\mathcal{C}_K^{\mathfrak{m}}\surjects \Fl^*, \]
whose kernel contains $U_K^+/U_{\mathfrak{m},1}$. As $\ell\nmid (\ell-1)$, the composition must  factor as
\[ 
\SelectTips{eu}{}\xymatrix@C=1pt{(\OK/\mathfrak{p}^n)^*\ar@{->>}[rr]\ar@{->>}[dr
]& 
&\Fl^*\\ &(\OK/\mathfrak{p})^*\ar@{->>}[ur]& }.\] Since  
$(\OK/\mathfrak{p})^*\simeq\Fl^*$, 
the surjection $(\OK/\mathfrak{p})^*\surjects\Fl^*$ is in fact an isomorphism.

Now take any $u\in (U_K^+-1)\cap U_K$. Then $u$ is a unit and $u+1\in U_K^+$. As the image of $u+1$ in $(\OK/\mathfrak{p})^*$ is in the kernel of the isomorphism $(\OK/\mathfrak{p})^*\rightarrow \Fl^*$, 
we must 
have 
$u+1\equiv 1 \pmod{\mathfrak{p}}$. But then $u\equiv 0\pmod{\mathfrak{p}}$, a 
contradiction as $u$ is a unit. Thus $\chi_{i_{0}}$ must be ramified at all places in $S_{\ell}$, and hence at all places in $\Sigma_{K}$. It follows that $\chi_{i_{0}}$ is trivial. 

\noindent{\emph{Case 3: $(\ell)=\mathfrak{p}\mathfrak{q}$.}} Lastly, suppose $(\ell)=\mathfrak{p}\mathfrak{q}$, with $f(\mathfrak{p}):=[\mathcal{O}_K/\mathfrak{p}\mathcal{O}: \Fl]=2$. Assume each character is ramified at exactly one of the primes 
lying above $\ell$. 
Suppose $\chi_{i_0}$ is ramified at $\mathfrak{q}$ and $\chi_{1-i_0}$ is 
ramified at 
$\mathfrak{p}$. Then, using $\chi_{i_0}$, we may argue exactly as in the totally 
split 
case 
to show that $\alpha\in\mathfrak{q}$, a contradiction. Thus one of the 
characters is 
unramified at both primes lying above $\ell$, making it trivial.
\end{proof}
\begin{corollary}
\label{C2:semistable}
Let $E, K$ and $\ell$ be as in Corollary~\ref{C1:semistable} and $\rho_{E,\ell}(G_K)\ne\Gfl$. Given $v\in\Sigma_K-S_E$, let $\phi_v\in\End(\tilde{E_v})$ be the Frobenius endomorphism and let $t_v$ be its trace. Then $t_v\equiv 1+N_v\pmod l$.
\end{corollary}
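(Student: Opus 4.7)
The plan is to exploit the very explicit form of $\rho_{E,\ell}$ supplied by Corollary~\ref{C1:semistable} and then compare the trace computed from that form with the trace computed from reduction at $v$.

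First I would reduce to the case $v\nmid \ell$ (the only case where the usual $\ell$-adic computation of $t_v$ applies directly). For such a $v$, since $v\in\Sigma_K - S_E$, the curve $E$ has good reduction at $v$, so by N\'eron--Ogg--Shafarevich the representation $\rho_{E,\ell}$ is unramified at $v$. This makes $\rho_{E,\ell}(\Frob_v)$ a well-defined conjugacy class in $\Gfl$, and the standard relation between the $\ell$-adic representation and the Frobenius endomorphism on the reduction gives that the characteristic polynomial of $\rho_{E,\ell}(\Frob_v)$ is congruent to $T^2 - t_v T + N_v \pmod \ell$. In particular
\[
\tr(\rho_{E,\ell}(\Frob_v)) \equiv t_v \pmod{\ell}.
\]

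Next I would apply Corollary~\ref{C1:semistable}. It gives a basis of $E[\ell](\Kbar)$ for which $\rho_{E,\ell}$ has the upper-triangular shape $\begin{pmatrix}\chi_1 & * \\ 0 & \chi_2\end{pmatrix}$, with one of $\chi_1,\chi_2$ trivial and the other equal to $\det\circ\rho_{E,\ell}$. By the Weil pairing, $\det\circ\rho_{E,\ell}$ is the mod-$\ell$ cyclotomic character, so for any $v\nmid \ell$ (in particular our chosen $v$) it sends $\Frob_v$ to $N_v \bmod \ell$. Reading off the trace from the triangular form therefore yields
\[
\tr(\rho_{E,\ell}(\Frob_v)) \;=\; \chi_1(\Frob_v)+\chi_2(\Frob_v) \;\equiv\; 1 + N_v \pmod{\ell}.
\]

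Combining the two expressions for $\tr(\rho_{E,\ell}(\Frob_v))$ gives $t_v \equiv 1 + N_v \pmod{\ell}$, as desired. I do not expect any serious obstacle: all the real work has been done in Corollary~\ref{C1:semistable}. The only point that requires a moment of care is making sure that the trace-of-Frobenius identification is legitimate, which is why the hypothesis $v \in \Sigma_K - S_E$ (good reduction) and the implicit restriction $v \nmid \ell$ are used; everything else is a direct substitution into the Borel form of the mod $\ell$ image.
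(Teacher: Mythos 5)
Your argument for the case $v\nmid\ell$ is essentially the paper's argument and is fine: use good reduction and N\'eron--Ogg--Shafarevich to identify $\tr(\rho_{E,\ell}(\Frob_w))$ with $t_v\bmod\ell$ and $\det(\rho_{E,\ell}(\Frob_w))$ with $N_v\bmod\ell$, then read off $\chi_1+\chi_2\equiv 1+\det$ from Corollary~\ref{C1:semistable}.

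However, there is a genuine gap: you dismiss the case $v\mid\ell$ by invoking an ``implicit restriction $v\nmid\ell$,'' but no such restriction exists. The hypothesis is only $v\in\Sigma_K-S_E$, i.e.\ good reduction, and places of good reduction lying above $\ell$ are very much in scope; the paper devotes roughly half the proof to them. For such $v$ the representation $\rho_{E,\ell}$ is ramified at $v$, so the ``unramified Frobenius'' computation breaks down and you cannot simply ``reduce'' to the $v\nmid\ell$ case. The paper's treatment of $v\in S_\ell\setminus S_E$ runs as follows. Because $\rho_{E,\ell}(G_K)$ lies in a Borel subgroup it cannot contain a nonsplit Cartan, so Lemma~\ref{L:Tate}(iv) forces $E$ to have \emph{ordinary} reduction at $v$. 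For $\ell=2$ this already gives a $k_v$-rational $2$-torsion point on $\tilde E_v$, hence $2\mid\#\tilde E_v(k_v)$, which is the desired congruence. For $\ell\geq 3$, one of the characters $\chi_i$ in Corollary~\ref{C1:semistable} is trivial. If $\chi_1=1$ there is a $K$-rational point $P$ of order $\ell$; using Lemma~\ref{L:Tate}(iv) one rules out the possibility that $\langle P\rangle$ lies in the kernel of reduction, so $P$ reduces to a nontrivial $k_v$-rational $\ell$-torsion point and $\ell\mid\#\tilde E_v(k_v)$. If instead $\chi_2=1$, one passes to the isogenous curve $E'=E/C$ (with $C$ the $G_K$-stable line defined by $\chi_1$), which has the same $t_v$ and $\#\tilde E_v(k_v)$, and runs the same argument there. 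None of this is subsumed by your unramified-Frobenius computation, so as written the proof does not establish the corollary.
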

\begin{remark} Since 
$\#\tilde{E_v}(k_v)=1-t_v+N_v$, the condition $t_v\equiv 
1+N_v\pmod l$ 
is equivalent to $\ell\mid\#\tilde{E_v}(k_v)$.
\end{remark}
\begin{proof}Suppose first that $v\in \Sigma_K-S_E-S_\ell$. The representation $\rho_{\linfty}$ is unramified at 
$v$ and the $\ell$-adic Tate modules of $E/K$ and its reduction $\tilde{E_v}/k_v$ are isomorphic as $D_w/I_w$-modules for any $w\in S_v$.  Then $\tr(\phi_v)=\tr(\rho_{\ell}(\Frob_w))\pmod l$ and 
$N_v=\det(\phi_v)=\det (\rho_{\ell}(\Frob_w))\pmod l$ for any $w\in S_v$. (Observe that although strictly speaking $\Frob_w$ is a coset in $D_w/I_w$, the value $\rho_{\ell}(\Frob_w)$ is well-defined, as $\rho_\ell$ is unramified at $v$.)
Now by Corollary~\ref{C1:semistable},
\begin{eqnarray*}
t_v\equiv\tr(\rho_{\ell}(\Frob_w))&\equiv&\chi_1(\Frob_w)+\chi_2(\Frob_w)\\
&\equiv&1+\det(\rho_{E,\ell}(\Frob_w)) \\
&\equiv&1+\det (\rho_{\ell}(\Frob_w))\\
&\equiv&1+N_v \pmod l,\end{eqnarray*}
and the claim is proved in this case. 

Now suppose $v\notin S_E$ but $v\in S_\ell$. Since $\rho_{E,\ell}(G_K)$ is contained in a 
Borel 
subgroup, it cannot contain a nonsplit Cartan subgroup. It follows from 
Lemma~\ref{L:Tate} that $E$ has ordinary reduction at $v$.

First consider $\ell=2$. Let $v$ be a place of $K$ lying over 2. Since $E$ has good ordinary reduction at $v$, the reduction $\tilde{E_v}$ has exactly one point, $P$, of order 2. Then $P$ is fixed by $\Galkv$, hence $k_v$-rational. But then $2$ divides $\#\tilde{E}_v(k_v)=1-t_v+N_v$, in which case $t_v\equiv 1+N_v\pmod 2$.

Now consider $\ell\geq 3$. 
Pick a basis $\{P_1,P_2\}$ of $\El[\Kbar]$ so that 
$\rho=\begin{pmatrix}\chi_1&*\\0&\chi_2\end{pmatrix}$, as in 
Corollary~\ref{C1:semistable}. 
We know that one of the $\chi_i$ is trivial. 

Suppose $\chi_1=1$. Then $E$ has a $K$-rational point $P$ of order $\ell$. If 
$\langle 
P\rangle$ is in the kernel of the reduction map, we have an exact sequence
\[0\rightarrow \langle P\rangle\rightarrow E[l](\Kbar)\rightarrow\tilde{E_v}[l](\overline{k_v})\rightarrow 0.\]
But then the representation of $I_w$ for any $w\mid v$ looks like
$\begin{pmatrix}1&*\\0&1\end{pmatrix}$, contradicting Lemma~\ref{L:Tate}.
Thus the reduction map sends $P$ to a nontrivial $k_v$-rational point of 
$\tilde{E}_v[l](k_v)$. It follows that $\ell$ divides $\#\tilde{E}_v(k_v)$, whence $t_v\equiv 1+N_v\pmod l$. 

Suppose $\chi_2=1$. Let $C$ be the $G_K$-invariant cyclic subgroup
defined by $P_1$. Consider the quotient $E'=E/C$. Since $E'$ is isogenous to $E$, it has the same reduction type at all places of $\Sigma_K$, and furthermore $\rho_{E'}\sim\rho_E$. In particular, it follows that $t_v'=t_v$ and $\#\tilde{E'}_v(k_v)=\#\tilde{E}_v(k_v)$ for our place $v$.  Now since $\chi_2$ is trivial, $E'[l]$ has a 
nontrivial 
$K$-rational point, and we may argue as in the $\chi_1=1$ case to prove 
$t_v\equiv 
1+N_v\pmod l$.
\end{proof}

Suppose $K$ satisfies the conditions of the previous corollaries. We now have the necessary means for determining the set of primes $\ell$ for which $\rho_{E,\ell}$ is surjective for a given semistable elliptic curve $E/K$. First 
compute $\#\tilde{E}_v(k_v)$  for some $v\notin S_E$. Let $R$ be the set of prime divisors of $\#\tilde{E}_v(k_v)$ and let $T$ be the set of primes in $\ZZ$ that ramify in $K$. According  to Corollary~\ref{C2:semistable}, the set of primes $\ell$ for which $\rho_{E,\ell}$ is not 
surjective is 
contained in $\{2,3,5\}\union R\union T$. For this finite set of primes we can then 
use the following criterion for checking whether $\rho_{E,\ell}(G_K)=\Gfl$.
\begin{proposition}\label{frobdisc} Let $\ell\geq 5$, and suppose $H\subseteq\Gfl$ is 
a subgroup  satisfying
\begin{itemize}
\item[(i)] $H$ contains elements $s_1,s_2$ such that 
$(\frac{\tr(s_i)^2-4\det(s_i)}{l})=(-1)^i$ and $\tr(s_i)\ne 0$.
\item[(ii)]$H$ contains an element $t$ such that $u=\tr(t)^2/\det(t)\ne 0,1,2,4$ 
and 
$u^2-3u+1\ne 0$.
\end{itemize}
Then $H$ contains $\SL(\Fl)$. In particular, if $\det:H\rightarrow\Fl^*$ is 
surjective, 
then 
$H=\Gfl$.
\end{proposition}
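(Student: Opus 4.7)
The plan is to invoke the classification of subgroups of $\Gfl$ that do not contain $\SL_2(\Fl)$: any such $H$ is contained in a Borel subgroup, the normalizer of a split Cartan, the normalizer of a non-split Cartan, or the preimage in $\Gfl$ of a subgroup of $\PGL_2(\Fl)$ isomorphic to $A_4$, $S_4$, or $A_5$ (cf.\ \cite[\S2.6]{Se72}). Assuming for contradiction that $H\not\supseteq\SL_2(\Fl)$, I would derive a contradiction in each of the four cases, using hypothesis (i) for the first three and hypothesis (ii) for the last.

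First I would analyze hypothesis (i). The discriminant $\tr(s)^2-4\det(s)$ of the characteristic polynomial is a nonzero square in $\Fl$ exactly when $s$ has two distinct eigenvalues in $\Fl$, and a nonzero non-square exactly when its eigenvalues lie in $\FF_{\ell^2}\setminus\Fl$. Therefore $s_1$ has distinct eigenvalues in $\FF_{\ell^2}\setminus\Fl$ while $s_2$ has distinct eigenvalues in $\Fl$, and both have nonzero trace. A Borel subgroup consists of $\Fl$-triangularizable matrices, so it cannot contain $s_1$, ruling out the Borel case. In the normalizer $N$ of a Cartan $C$, any element of $N\setminus C$ swaps the two eigenspaces of $C$ (over $\Fl$ in the split case, and over $\FF_{\ell^2}$ in the non-split case), and hence has trace zero. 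In the split-normalizer case, $s_1$ could lie neither in $C$ (non-scalar elements of a split Cartan have $\Fl$-rational eigenvalues) nor in $N\setminus C$ (trace zero); the non-split-normalizer case is symmetric, applied to $s_2$.

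Next I would use hypothesis (ii) to rule out the exceptional case. The quantity $u(g):=\tr(g)^2/\det(g)$ is invariant under scaling $g$ by $\Fl^*$, so it descends to a well-defined function on $\PGL_2(\Fl)$. Writing the eigenvalues of $g$ as $\alpha,\beta\in\overline{\Fl}^*$, one computes $u(g)=\alpha/\beta+2+\beta/\alpha$, which depends only on the multiplicative order $n$ of $\alpha/\beta$ --- equivalently, the order of the image of $g$ in $\PGL_2(\Fl)$. A direct calculation gives $u=4,0,1,2$ for $n=1,2,3,4$ respectively, and $u^2-3u+1=0$ for $n=5$. Since every element of $A_4$, $S_4$, and $A_5$ has order in $\{1,2,3,4,5\}$, hypothesis (ii) forbids the projective image of $H$ from being contained in any of these groups. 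With all four cases of the classification excluded, one concludes $H\supseteq\SL_2(\Fl)$; the final sentence of the proposition then follows from the exact sequence $1\to\SL_2(\Fl)\to\Gfl\xrightarrow{\det}\Fl^*\to 1$.

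The hardest part is the bookkeeping in the $u$-value computation, and in particular verifying that the excluded set $\{0,1,2,4\}\cup\{\text{roots of }u^2-3u+1\}$ in hypothesis (ii) corresponds exactly to the $u$-values attained by elements of projective order in $\{1,2,3,4,5\}$. One should also keep in mind a small-characteristic coincidence at $\ell=5$, where every element of projective order $5$ is unipotent with $u=4$ and the polynomial $u^2-3u+1$ specializes to $(u-4)^2$; this is harmless, since the exclusion in (ii) still rules such an element out.
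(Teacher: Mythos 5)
The paper does not actually prove this proposition---it simply cites \cite[Prop.~19]{Se72}---and your argument is a correct reconstruction of Serre's original proof: hypothesis (i) produces one element with irrational eigenvalues and one with distinct rational eigenvalues, both of nonzero trace, which together rule out the Borel and Cartan-normalizer cases, while hypothesis (ii) is engineered so that $u(t)=\tr(t)^2/\det(t)$ cannot arise from an element of projective order in $\{1,2,3,4,5\}$, ruling out the exceptional $A_4$, $S_4$, $A_5$ cases. Your $u$-value table and the $\ell=5$ sanity check (where $u^2-3u+1\equiv(u-4)^2$ and unipotent elements give $u=4$) are both correct.
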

\begin{proof}
See \cite[Prop. 19]{Se72}
\end{proof}
%*****************************
\subsection{A suitable cubic extension}\label{S:K}
Let us fix a suitable number field. For the remainder of the paper we will let $K$ be the cubic extension $\Qalpha$, where $\alpha$ is the real root of $f(x)=x^3+x+1$. 

We easily see that $K$ satisfies the conditions of Corollaries \ref{C1:semistable} and \ref{C2:semistable}. The root $\alpha$ defines the sole real embedding $K\injects\RR$. The discriminant of $f$ is $-31$. This implies that $K$ is non-Galois, and hence that $K\cap\QQ^\cyc=\QQ$. It also follows that the ring of integers $\mathcal{O}_K$ is $\ZZ[\alpha]$, and that $31$ is the only rational prime that ramifies in $\OK$. Further computation then reveals that the ideal and narrow class groups of $K$ are trivial. Lastly we show that $\alpha$ is an element of $(U_K^+-1)\cap U_K$. Since $\alpha(\alpha^2+1)=-1$, we have $\alpha\in U_K$. (In fact one can show that $\alpha$ generates $U_K$.) But then $\alpha+1=-\alpha^3$ is also a unit. It is also not difficult to see that $\alpha+1$ is positive, and hence totally positive. Thus we have $\alpha+1\in U_K^+$ and $\alpha\in(U_K^+-1)$.  

As described in Section~\ref{Ksemistable}, with the help of Corollaries \ref{C1:semistable} and \ref{C2:semistable} we can now easily find elliptic curves $E/\QQ(\alpha)$ with surjective adelic representations.

%\item[(ii)]Suppose further that a Weierstrass equation for $E$ is chosen, with 
%discriminant  $\Delta$. Then according to Theorem~\ref{T:rhoSurj}, the abelianization map $(\sgn,\det)\colon  \rho_{E}(G_K)\rightarrow\Gab$ is 
%surjective 
%if and only if $\sqrt{\Delta}\notin K^\cyc$. Since $K\cap\QQ^\cyc=\QQ$, this is true if and only if $\Delta$ is not of the form $k^2q$ for some $k\in K$, 
%$q\in\QQ$.

%************************************************
\subsection{An example}\label{SS:example}
Let $K=\QQ(\alpha)$ and let $E/K$ be the elliptic curve $y^2+2xy+\alpha y=x^3-x^2$. We compute 
$(\Delta_E)=P_{131}Q_{2207}$, where the rational primes 131 and 2207 factor as 
$(131)=P_{131}Q_{131}R_{131}$ and $(2207)=P_{2207}Q_{2207}$, with  
$f(P_{2207})=2$. 
Furthermore, $(j_E)=(2)^{12}(3)^3/Q_{131}Q_{2207}$. Since the conductor of an 
elliptic curve divides the discriminant (\cite[IV.11.2]{ATAEC}), we see that $E$ 
is semistable with conductor $N=P_{131}Q_{2207}$.

Set $H=\rho(G_K)\subseteq\GZ$. From 
the  splitting behavior of 131 and 2207 we may deduce that $\sqrt{\Delta}\notin K^{\cyc}$. Since in addition $K\cap\QQ^\cyc=\QQ$, it follows that the abelianization map $(\sgn,\det):H\rightarrow\Gab$ is surjective. By 
Theorem~\ref{T:rhoSurj} we need only show that $E/K$ has no exceptional primes; i.e., that  
$H_\ell=\Gl$ for all prime $\ell$.

Recall that for a good place $v\in S_E$, we denote by $t_v$ the trace of the Frobenius element $\phi_v\in\End(\tilde{E}_v)$. Using {\sc Magma} (\cite{MAGMA}) we now reduce at various places to obtain the following table.
\[
\begin{array}{lllllr}
v=(7)&\#\tilde{E}_v(k_v)=324& N_v=343&t_v=20&(t_v^2-4N_v^2)\equiv& 20\pmod{31}\\
v=Q_{11} &\#\tilde{E}_v(k_v)=16& N_v=11&t_v=-4&(t_v^2-4N_v^2)\equiv &3\pmod{31}\\
v=Q_{23} &\#\tilde{E}_v(k_v)=15& N_v=23&t_v=9\\
v=Q_{29} &\#\tilde{E}_v(k_v)=24& N_v=29&t_v=6.
\end{array}
\]

Since $v(j_E)=-1$ for all $v\in S_E$, it follows from 
Corollary~\ref{C2:semistable} that 
for 
all $\ell\ne 31$, if $H(\ell)\ne \Gfl$, then $\ell\mid 16$ and $\ell\mid 15$ (the values of 
$\#\tilde{E}_v(k_v)$ in 
rows 
2 and 3 of our table). There is no such $\ell$. Thus $H(\ell)=\Gfl$ for all $\ell\ne 31$. 
Since 
$\det_{H}$ is surjective, Corollary~\ref{C:ref} implies $H_\ell=\Gl$ for all $\ell\ne 
2,3,31$. It remains only to show that these three primes are not exceptional.
\\ \\
\noindent\emph{Case $\ell=31$.} The values (modulo 31) of $t_v^2-4N_v^2$ for $v=(7)$ and 
$v=Q_{11}$ are 
20 
and 3 respectively. The first is a square modulo 31; the second is not. 
Furthermore, for 
$v=(7)$ we have $u=t_v^2/N_v\equiv 10\not\equiv 0,1,2,4 \pmod{31}$, and 
$u^2-3u+1\not\equiv 
0 \pmod{31}$. Thus setting $s_1$ and $t$ equal to $\rho_{E,31}(\Frob_w)$ for any $w\mid (7)$, and setting 
$s_2$ equal to $\rho_{E,31}(\Frob_{w'})$ for any $w'\mid Q_{11}$, we see that 
$H(31)\subseteq\GL_2(\FF_{31})$ satisfies the conditions of 
Proposition~\ref{frobdisc}. 
Thus 
$H(31)$ contains $\SL_2(\FF_{31})$. Since $\det:H(31)\rightarrow\FF_{31}^*$ is 
surjective, 
we have $H(31)=\GL_2(\FF_{31})$, and hence $H_{31}=\GL_2(\ZZ_{31})$.
\\ \\
\noindent\emph{Case $\ell=3$.} Let $M:=\ML_2(\ZZ_3)$. Since $H(3)=\GL_2(\FF_3)$, we need only show that $H\supseteq I+3M$. By Lemma~\ref{L:ref1}, it suffices to show 
that $H(9)\supseteq(I+3M)/(I+9M)$. Let $v=Q_{29}$, 
and let 
$\pi\in H_3$ be a $\rho(\Frob_w)$ for any $w\in S_v$. From our table, the characteristic 
polynomial of 
$\pi$ is $t^2-6t+29$. Modulo 9 this factors as $(t-7)(t-8)$. Since $7\not\equiv 
8\pmod 
3$, 
$\pi$ is diagonalizable in $\GL_2(\ZZ_3)$. After a change of basis, we may 
assume that 
$\pi\equiv \begin{pmatrix}-2&0\\0&-1\end{pmatrix}\pmod 9$, in which case 
\[\pi^2\equiv\begin{pmatrix}4&0\\0&1\end{pmatrix}\equiv 
I+3\begin{pmatrix}1&0\\0&0\end{pmatrix}\pmod 9.\]
But $(I+3M)/(I+9M)$ is a $\GL_2(\FF_3)$-module, and since $H(9)\surjects\GL_2(\FF_3)$ it follows that $H(9)\cap(I+3M)/(I+9M)$ is a $\GL_2(\FF_3)$-submodule. (See \ref{SSS:Gl}.) Furthermore it is easily seen that $I+3\begin{pmatrix}1&0\\0&0\end{pmatrix}$ generates $(I+3M)/(I+9M)$ as a $\GL_2(\FF_3)$-module. Thus $H(9)\supseteq(I+3M)/(I+9M)$, and hence $H_3=\GL_2(\ZZ_3)$.  
\\ \\
\noindent\emph{Case $\ell=2$.} Let $M:=\ML_2(\ZZ_2)$. First we will show that $H(4)=\GL_2(\ZZ/4\ZZ)$. Since $H\surjects\GL_2(\FF_2)$, it suffices to show that $H(4)\supseteq (I+2M)/(I+4M)$.  

Let $\pi=\rho_{2^{\infty}}(\sigma)\in H_2$ be the image of a complex 
conjugation 
automorphism $\sigma\in G_K$. A calculation shows that $\Delta_E$ is positive 
(thinking 
of 
$K=\QQ(\alpha)$ as a subfield of $\RR$). Thus $\sqrt{\Delta_E}$ is fixed by 
complex 
conjugation. This means that $\pi\in\ker(H_2\xrightarrow{\sgn}\{\pm 
1\})=N(2^{\infty})$; 
i.e., the image $r_2(\pi)$ is contained in the normal subgroup
\[\left\{ 
I,\begin{pmatrix}1&1\\1&0\end{pmatrix},\begin{pmatrix}0&1\\1&1\end{pmatrix} 
\right\}\subseteq\GL_2(\FF_2).\]
But from the remarks in Section~\ref{S:K}, we have  $\tr\pi=1+(-1)=0$.  Thus 
$\pi\equiv 
I\pmod 2$; i.e., we have $\pi=I+2A\in I+2M$.
Since the characteristic polynomial of $\pi$ is $t^2-1$, it follows that the 
characteristic 
polynomial of $A$ is $t^2+t$. As this has distinct roots modulo 2, it follows 
that $A$, 
and 
hence $\pi$, is diagonalizable in $\GL_{2}(\ZZ_{2})$. After a suitable 
change of 
basis we may assume that 
\[\pi=\begin{pmatrix}1&0\\0&-1\end{pmatrix}
=I+2\begin{pmatrix}0&0\\0&-1\end{pmatrix}=:I+2A.\]

As with the $\ell=3$ case, since $H(2)=\GL_2(\FF_2)$, the subgroup $H(4)\cap(I+2M)/(I+4M)$ is in fact a $\GL_2(\FF_2)$-submodule of $(I+2M)/(I+4M)$. Again it is easily seen that $I+2A$ generates $(I+2M)/(I+4M)$ as a $\GL_2(\FF_2)$-module. Thus $H(4)\supseteq (I+2M)/(I+4M)$ and $H(4)=\GL_2(\ZZ/4\ZZ)$.

Since $(\sgn,\det)(H)=\{\pm 1\}\times\ZZ_2^*$ and $H(4)=\GL_2(\ZZ/4\ZZ)$, it now follows from Corollary~\ref{C:G8} that $H=\GL_2(\ZZ_2)$.

Having shown that $H_\ell=\Gl$ for all $\ell$, and that $(\sgn,\det)(H)=\Gab$, we conclude that $H=\GZ$. In other words, the adelic representation $\rho_E$ is surjective in this example. 
%***********************
\bibliography{SurjAdelicRep}
\end{document}